\def\k{{\Bbbk}}
\def\H{{\mathcal H}}
\def\OO{{\mathcal O}}
\def\bC{{\mathbb C}}
\def\g{{\mathfrak g}}
\def\fl{{\mathfrak l}}
\def\NN{{\mathcal N}}
\def\V{{\mathcal V}}
\def\Z{{\mathbb Z}}
\def\End{\mathop{\fam0 End}}
\def\Lie{\mathop{\fam0 Lie}}
\def\Ad{\mathrm{Ad}\,}
\def\ad{\mathrm{ad\,}}
\def\la{\langle}
\def\ra{\rangle}
\theoremstyle{plain}
\newtheorem{theorem}{Theorem}[section]
\newtheorem{corollary}{Corollary}[section]
\newtheorem{prop}{Proposition}[section]
\newtheorem{lemma}{Lemma}[section]
\theoremstyle{definition}
\theoremstyle{remark}
\newtheorem{rem}{Remark}[section]
\def\subtitle#1. {{\medskip\bf#1\par\nobreak\smallskip}}
\def\proclaim#1. {\medbreak\bgroup\noindent\bf#1. \it}
\def\endproclaim{\egroup
\ifdim\lastskip<\medskipamount\removelastskip\medskip\fi}
\def\citedef#1 {\advance\citation by1
  \expandafter\edef\csname#1\endcsname{{\the\citation}}
  \checkendcitedef}
\def\checkendcitedef#1{\ifx#1\endcitedef\else\citedef#1\fi}
\def\cite#1{\csname#1\endcsname}
\newtoks\nextauth
\newif\iffirstauth
\def\checkendauth#1{\ifx\endauth#1
        \iffirstauth\the\nextauth
        \else{} and \the\nextauth\fi,
    \else\iffirstauth\the\nextauth\firstauthfalse
        \else, \the\nextauth\fi
        \expandafter\auth\expandafter#1\fi}
\def\auth#1 #2 {\nextauth={#1 #2}\checkendauth}
\newif\ifinbook
\newif\ifbookref
\def\nextref#1 {\bookreffalse\inbookfalse
    \bibitem[\cite{#1}]{}
    \firstauthtrue
    \ignorespaces}
\def\paper#1{{\it#1,}}
\def\book#1{\bookreftrue{\it#1,}}
\def\journal#1{#1\ifinbook,\fi}
\def\bookseries#1{#1,}
\def\Vol#1{\ifbookref Vol. #1,\else\ifinbook Vol. #1,\else{\bf#1}\fi\fi
    \space\ignorespaces}
\def\publisher#1{#1,}
\def\Year#1{\ifbookref #1.\else\ifinbook #1,\else(#1)\fi\fi
    \space\ignorespaces}
\def\Pages#1{\ifinbook pp. #1.\else #1.\fi}
\begin{document}
\title{Hesselink strata in small characteristic and Lusztig--Xue pieces}
\author{Alexander Premet}
\thanks{\nonumber{\it Mathematics Subject Classification 21020}.
Primary 14L30, 17B45. Secondary 14L15, 20G41. {\it \mbox{\,\quad}Key
words and phrases}: simple algebraic groups, nilpotent orbits, Hesselink strata}
\address{Department of Mathematics, The University of Manchester, Oxford Road, Alan Turing Building, 
M13 9PL, UK} \email{alexander.premet@manchester.ac.uk}
\begin{abstract}
\noindent Let $G$ be a connected reductive algebraic group over an algebraically closed field $\k$
of characteristic $p\ge 0$ and $\g=\Lie(G)$.
In this paper we show that the nilpotent pieces ${\rm LX}(\Delta)$ introduced by Lusztig form a partition of  the nilpotent cone of $\g$ and hence coincide with the 
Hesselink strata $\H(\Delta)$ where $\Delta$ runs through the set of all weighted Dynkin diagrams of $G$. Thanks to earlier results obtained by Lusztig, Xue and Voggesberger this boils down to describing the pieces ${\rm LX}(\Delta)$ for groups of type ${\rm E_7}$ in characteristic $2$ and for groups of type ${\rm E_8}$ in characteristic $2$ and $3$. 
Our arguments are computer-free, but rely very heavily on the results of 
Liebeck--Seitz obtained in [\cite{LS}].
%\maketitle
\end{abstract}

\maketitle

\medskip

\medskip 

\begin{center}
{\it In memory of Gary Seitz}
\end{center}

\bigskip

\medskip
\section{Introduction}
Let $G$ be a connected reductive algebraic group of rank $\ell$ over an algebraically closed field $\k$ and $T$ a maximal torus of $G$. Let $\Sigma$ be the root system of $G$ with respect to $T$ and  $\Pi$ a basis of simple roots of $\Sigma$.
Write $X(T)$ (resp. $X_*(T)$) for the lattice of rational characters
(resp. cocharacters) of $T$
 and $X_*^+(T)$ for  the intersection of $X_*(T)$ with the dual Weyl chamber of $X_*(T)\otimes_\Z\mathbb{R}$ associated with $\Pi$. 
Each rational cocharacter $\lambda\in X_*(G)$ gives rise to a $\Z$-grading $$\g=\textstyle{\bigoplus}_{i\in\Z}\,\g(\lambda,i),\ \ \ \ \g(\lambda,i)\,=\,\{x\in\g\,|\,\,(\Ad \lambda(t))\,x=t^ix\ \ \mbox{for all}\ \  t\in\k^\times\}$$  of the Lie algebra $\g=\Lie(G)$. For $d\in\Z$, we put $\g(\lambda,\ge d):=\bigoplus_{i\ge d}\,\g(\lambda, i)$ and   $\g(\lambda,< d):=\bigoplus_{i< d}\,\g(\lambda, i)$, and denote by
$P(\lambda)=L(\lambda)R_u(\lambda)$ the parabolic subgroup of $G$ associated with $\lambda$. 
Here $L(\lambda)=Z_G(\lambda)$ is a Levi subgroup of $G$.
Recall that $\Lie(P(\lambda))=\g(\lambda,\ge 0)$ and $\Lie(L(\lambda))=\g(\lambda,0)$.

We let $\NN(\g)$ denote the nilpotent cone of $\g$, the variety of all $(\Ad G)$-unstable vectors of $\g$, and write $\mathfrak{D}_G$ for the set of all 
{\it Dynkin labels}
 attached to the nilpotent orbits of a complex Lie algebra with root system $\Sigma$. 
As explained in [\cite{CP}], the Hesselink strata of $\NN(\g)$ are parameterised by the set of cocharacters $\tau_\Delta\in X_*^+(T)$ with $\Delta\in\mathfrak{D}_G$ and form a partition of $\NN(\g)$, so that
$$\NN(\g)\,=\,\textstyle{\bigsqcup}_{\Delta \in \mathfrak{D}_G}\,\H(\Delta).$$ 
The cocharacter $\tau_\Delta$ can be read off the weighted Dynkin diagram $(a_1,\ldots,a_\ell)$ associated with the complex nilpotent orbit with label $\Delta$
as follows:
if $x$ is a root vector of $\g$ corresponding to a simple root $\alpha_i\in\Pi$  then 
$(\Ad \tau_\Delta(t)))(x)=t^{a_i}x$ for all $t\in\k^\times$. The Hesselink stratum attached to $\tau_\Delta$ has the form
$$\H(\Delta)\,=\,(\Ad G)\,\big(\V(\tau_\Delta,2)_{ss}+\g(\tau_\Delta,\ge 3)\big)$$ where $\V(\tau_\Delta,2)_{ss}\ne \varnothing$ is the set of all $(\Ad L(\tau_\Delta))$-semistable vectors of $\g(\tau_\Delta,2)$ (see [\cite{CP}] for more detail).

Given $\Delta\in \mathfrak{D}_{G}$  we write $\g_{2}^{\Delta,!}$ for the set of all $x\in\g(\tau_\Delta,2)$
such that $G_{x}\subset P(\tau_\Delta)$ where $G_{x}=Z_G(x)$ is the stabiliser of $x$ in $G$. As explained in [\cite{CP}, Remark~7.3] each set $\g_{2}^{\Delta,!}$  contains $\V(\tau_\Delta,2)_{ss}$, a nonempty Zariski open subset of $\g(\tau_\Delta,2)$.  The set $${\rm LX}(\Delta)\,:=\,(\Ad G)\big(\g_{2}^{\Delta, !}+\g(\tau_\Delta,\ge 3)\big)$$ containing $\H(\Delta)$ will be referred to as
 the {\it Lusztig-Xue piece} of $\NN(\g)$ associated with $\Delta$.
The pieces ${\rm LX}(\Delta)$ and their analogues for $\NN(\g^*)$ and for the unipotent variety of $G$ were introduced by Lusztig.
Viability of these pieces has to do with the fact that $\g_2^{\Delta,!}$ is defined in a more transparent fashion than its elusive subset $\V(\tau_\Delta,2)_{ss}$.

In [\cite{L2}, Appendix~A], Lusztig and Xue proved that the pieces ${\rm LX}(\Delta)$ form a  partition of $\NN(\g)$ 
in the case where $G$ is a classical group. 
Very recently, the same property was established by Voggesberger for groups of type ${\rm G}_2$, ${\rm F}_4$ and ${\rm E}_6$ (see our discussion in \ref{4.1} for more detail). These results imply that  $\rm{LX}(\Delta)=\H(\Delta)$ for all $\Delta\in\mathfrak{D}_{G}$ provided that $G$ is not of type ${\rm E_7}$ or ${\rm E_8}$. 

The partition property of the coadjoint analogues of ${\rm LX}(\Delta)$ was established by Lusztig [\cite{L3}] and Xue [\cite{Xue1}] in all cases where $G$ is a simple algebraic group and $p={\rm char}(\k)$ equals the ratio of the squared lengths of long and short roots in $\Sigma$. In all other cases
there is a $G$-equivariant bijection between $\NN(\g)$ and $\NN({\g}^*)$ which enables one to identify the nilpotent coadjoint orbits and pieces of $\g^*$ with those of $\g$; see [\cite{PS}, Section~5.6].

 It was conjectured in [\cite{Vog}] that the equality 
 $\rm{LX}(\Delta)=\H(\Delta)$ 
should also hold for all $\Delta\in\mathfrak{D}_G$ in the case where $G$ is a group of type ${\rm E_7}$ or ${\rm E_8}$. 
Our goal in this paper is to prove this conjecture. 

The orbits $\OO(e)=(\Ad G)\,e$ with $e\in\NN(\g)$
will be denoted by their Dynkin labels $\Delta$ or their
variants $(\Delta)_p$. The latter are attached to a small number of new nilpotent orbits which appear when $(\Sigma,p)\in\{({\rm G_2}, 3), ({\rm F_4}, 2), ({\rm E_7}, 2), ({\rm E_8}, 2), ({\rm E_8}, 3)\}$; see [\cite{LS}] for detail.
Combining our results obtained in Section~2 with the results of Lusztig, Xue and Voggesberger mentioned above we obtain the following:
\begin{theorem}\label{main}
	Let $G$ be a connected reductive group over an algebraically closed field $\k$ of characteristic $p\ge 0$. Then  $\H(\Delta)={\rm LX}(\Delta)$ for all $\Delta\in\mathfrak{D}_G$ and hence $$\NN(\g)\,=\,\textstyle{\bigsqcup}_{\Delta \in \mathfrak{D}_G}\,{\rm LX}(\Delta).$$ 
\end{theorem}
 Since proving Theorem~\ref{main} reduces quickly to the case where $\Sigma$ is an irreducible root system, we may assume without loss of generality that our algebraic group $G$ is simple and simply connected.  
 
 We use Steinberg's notation $x_\alpha(t)$ for elements of the unipotent root subgroups $U_{\alpha}$ of $G$; see [\cite{Stein}, \S 3]. Simple root vectors
$e_{\alpha_i}$ with $\alpha_i\in\Pi$ are denoted by $e_i$, and we always use Bourbaki's numbering [\cite{Bou}] of simple roots. We assume that root vectors $e_\gamma\in\g_\gamma$ come from a Chevalley basis of an admissible lattice $\g_\Z\subset \g_\bC$, where $\g_\bC$ is a complex Lie algebra with root system $\Sigma$. 
Since we mostly work over fields of characteristic $2$, the signs of structure constants  do not really affect our computations. 

The Weyl group of $\Sigma$ is denoted by $W$ and we write $\la\,\cdot\, ,\cdot\,\ra$ for the canonical pairing between $X(T)$ and $X_*(T)$ with values in $\Z$. We fix a $W$-invariant $\mathbb{Q}$-valued inner product $(\,\cdot\,|\,\cdot\,)$ on $X_*(T)_\mathbb{Q}=X_*(T)\otimes_\Z\mathbb{Q}$ which enables one to identify $X_*(T)_\mathbb{Q}$ with the dual vector space $X(T)_\mathbb{Q}=X(T)\otimes_\Z\mathbb{Q}$.

When we need to specify a particular root vector, we sometimes follow the conventions of [\cite{LS}]. For example,  a root vector $e_\gamma$ with $\gamma=\alpha_2+\alpha_3+2\alpha_4+2\alpha_5+\alpha_6$ is denoted by $e_{234^25^26}$.
When confusion is unlikely, we prefer standard conventions for specifying root vectors.

We occasionally use the $(\Ad\,G)$-invariant $\Z$-valued  bilinear form $\kappa$ on a minimal admissible lattice $\g_\Z\subset \g_\mathbb{C}$ introduced in [\cite{CP}, 7.2]. Its reduction modulo $p$ will be denoted by the same symbol.
When describing certain cocharacters $\tau\in X_*(T)$ we often specify their effect
on the root vectors $e_i$ where $1\le i\le \ell$. More precisely, if $(\Ad \tau(t))\,e_i=t^{r_i}\, e_i$ for all $t\in\k^\times$, then we write $\tau=(r_1,\ldots, r_\ell)$. This will cause no confusion since $\Pi=\{\alpha_1,\ldots,\alpha_\ell\}$ is a $\mathbb{Q}$-basis of $X(T)_\mathbb{Q}$. 

If ${\rm char}(\k)=p>0$ then $\g=\Lie(G)$ carries a canonical restricted Lie algebra structure $\g\ni x\mapsto x^{[p]}\in \g$ equivariant under the adjoint action of $G$. 
It is well known that the nilpotent cone $\NN(\g)$ coincides with the set of all $x\in\g$ such that $x^{[p]^N}=0$ fore $N\gg 0$.  
A restricted Lie subalgebra $\mathfrak{a}$ of $\g$ is called $[p]$-{\it nilpotent} (resp. {\it toral}) if $\mathfrak{a}\subseteq\NN(\g)$ (resp. if the $[p]$-mapping $x\mapsto x^{[p]}$ is one-to-one on $\mathfrak{a}$). Given $x\in\g$ we write $\g_x$ for the centraliser of $x$ in the Lie algebra $\g$, and we often use the fact that $\Lie(G_x)\subseteq \g_x$.
If $x\in\g(\lambda,r)$ for some $\lambda\in X_*(G)$ and $r\in\Z$ then $\g_x=\bigoplus_{i\in\Z}\,\g_e(\lambda,i)$ where $\g_e(\lambda,i)=\g_e\cap \g(\lambda,i)$.
 We say that $x\in\g$ is  {\it toral} if $x^{[p]}=x$
\section{Hesselink strata and Lusztig--Xue pieces}
\subsection{}\label{4.1}
Let $\tau=\tau_\Delta\in X_*(T)$ be the cocharacter associated with $\Delta\in\mathfrak{D}_G$ and write $\g_{2}^{\Delta,!}$ for the set of all $x\in\g(\tau,2)$
such that $G_{x}\subset P(\tau)$. 
Since the parabolic subgroup $P(\tau)$ is optimal for all $x\in\V(\tau,2)_{ss}$ in the sense of the Kempf--Rousseau theory, we have the inclusion
$\V(\tau,2)_{ss}\subseteq\g_{2}^{\Delta,!}$.  From [\cite{CP}, Theorem~6.1(iii)] it follows that $\V(\tau,2)_{ss}$ is a nonempty Zariski open subset of $\g(\tau,2)$.  
The set $${\rm LX}(\Delta)\,:=\,(\Ad G)\big(\g_{2}^{\Delta, !}+\g(\tau,\ge 3)\big)$$ is called the {\it Lusztig-Xue piece} of $\NN(\g)$ associated with $\Delta$.
By the above,
${\rm LX}(\Delta)$ contains  $\H(\Delta)$ for every $\Delta\in\mathfrak{D}_G$.

The pieces ${\rm LX}(\Delta)$ were first introduced by Lusztig [\cite{L2}] in the Lie algebra case. In [\cite{L3}], the definition was extended to cover the coadjoint nullcone  $\NN(\g^*)$ and the unipotent variety $\mathfrak{U}(G)$ of $G$. 
In [\cite{L2}, Appendix~A], Lusztig and Xue proved that the following decomposition
\begin{equation}\label{E1} \NN(\g)\,=\,\textstyle{\bigsqcup}_{\Delta\in \mathfrak{D}_{G}}\, \mathrm{LX}(\Delta)\end{equation}   
holds for all groups of type $\rm A$, $\rm B$, $\rm C$, $\rm D$, the key point being that the union is disjoint. Very recently, the same property was established by L.~Voggesberger for groups of type ${\rm G}_2$, ${\rm F}_4$ and ${\rm E}_6$ 
with the help of {\sc Magma}; see [\cite{Vog}, Theorem~1.1]. 

Note that if (\ref{E1}) holds for $G$ then $\rm{LX}(\Delta)=\H(\Delta)$ for all $\Delta\in\mathfrak{D}_{G}$; see [\cite{CP}, Remark~7.3.1] for detail. It was conjectured in [\cite{Vog}, Conjecture~1.2] that (\ref{E1})  should also hold for the $\k$-groups of type ${\rm E_7}$ and ${\rm E_8}$. A similar expectation (covering $\NN(\g)$, $\NN(\g^*)$ and $\mathfrak{U}(G)$) was expressed by Lusztig in [\cite{L3}, 2.3].

 In this paper we aim to confirm Voggesberger's conjecture. Lusztig's expectation related to partitioning the unipotent variety of $G$ will be discussed in \ref{4.13}.
For completeness, we also provide a new proof for groups of type ${\rm G_2}$, ${\rm F_4}$ and ${\rm E_6}$.

  Our task will become much simpler if we establish the following: 
 \begin{equation}\label{S1}
 \mbox{\it If $e\in \g(\tau_\Delta,2)$ and $G_e\subset P(\tau_\Delta)$ then $e\in \H(\Delta)$}.
 \end{equation}

 Proving statement~(\ref{S1}) will occupy the main body of the paper.
From now on we assume that the group $G$ is simple, simply connected, and has type ${\rm E}_6$, ${\rm E}_7$, ${\rm E}_8$, ${\rm F}_4$ or ${\rm G}_2$. Let $\kappa$ denote the normalised Killing form introduced in [\cite{CP}, 7.3]. If all roots of $\Sigma$ have the same length then the radical of $\kappa$ coincides with the central toral subalgebra of $\g$; see [\cite{CP}, Lemma~7.3]. 
 We start with a reduction lemma which will also explain why (\ref{E1}) always holds in good characteristic.
\begin{lemma}\label{L1}
Suppose $G$ is a group of type $\rm E$ and let $\tau=\tau_\Delta$, where $\Delta\in \mathfrak{D}_{G}$.
%suppose that the normalised Killing form $\kappa$  induces a perfect pairing between %$\g(\tau, 2)$ and $\g(\tau,-2)$. 
If $e\in\g(\tau,2)$ is such that $G_{e}\subset P(\tau)$ and
$\g_{e}=\Lie(G_{e})$, then $e\in\H(\Delta)$. 
\end{lemma}
\begin{proof} 
As $G$ is simply connected, the centre of the Lie algebra $\g$ is spanned by a toral element $z\in\g(\tau,0)$ which is nonzero if and only if $(\Sigma,p)$ is one of $({\rm E_6},3)$ of $({\rm E_7},2)$. For $i\ge 0$, we let $[e,\g(\tau,i)]^\perp$ denote the set of all $x\in \g(\tau,-i-2)$ such that $\kappa(x,[e,\g(\tau,i)])=0$. Since $\kappa$ is $(\Ad G)$-invariant
we have that $$[e,\g(\tau,i)]^\perp=\{x\in \g(-i-2)\,|\,\,[e,x]\in {\rm Rad}\,\kappa\}.$$ As
${\rm Rad}\,\kappa=\k z\subset \g(\tau,0),$ we get 
$[e,\g(\tau,i)]^\perp=\g_e(\tau,-i-2)$ for $i\ge 1$ and $[e,\g(\tau,0)]^\perp=\{x\in \g(\tau,-2)\,|\,\,[e,x]\in \k z\}$.

By our assumption, $\g_{e}=\Lie(G_{e})\subset \Lie(P(\tau))=\bigoplus_{\i\ge 0}\,\g(\tau,i).$ As $\g_e=\bigoplus_{i\in\Z}\,\g_e(\tau,i)$ it follows that $\g_e(\tau,i)=0$ for $i\le -1$, forcing $\g(\tau,i+2)=[e,\g(\tau,i)]$ for all $i\ge 1$. Also, $\g(\tau,2)=[e,\g(\tau,0)]$ when $z=0$. If $z\ne 0$ we can only say at this point that the subspace $[e,\g(\tau,0)]$ has codimension $\le 1$ in $\g(\tau,2)$.

Since $\g_e=\Lie(G_e)$ we have that \begin{eqnarray*}\dim (\Ad G)\,e&=&\dim\g-\dim\g_e= \textstyle{\sum}_{i\in\Z}\,\dim\g(\tau,i)-\textstyle{\sum}_{i\ge 0}\,\dim \g_e(\tau,i)\\
	&=&\textstyle{\sum}_{i<0}\,\dim\g(\tau,i)+\textstyle{\sum}_{i\ge 0}\,\dim\, [e,\g(\tau,i)]\\
	&=&\dim \,[e,\g(\tau,0)]+\textstyle{\sum}_{i\not\in \{0,1,2\}}\,\dim \g(\tau,i).
	\end{eqnarray*}
Since it follows from [\cite{HSp}, Theorem~2] that $\dim \,(\Ad G)\,e$ 
and $\textstyle{\sum}_{i\not\in \{0,1\}}\,\dim \g(\tau,i)$ are even numbers, it must be that
$\dim\, [e,\g(\tau,0)]\equiv \dim \g(\tau,2)\mod 2$. In view of our earlier remarks this yields $\g(\tau,2)=[e,\g(\tau,0)]$.

Let $L(\tau)=Z_{G}(\tau)$, a Levi subgroup of $P(\tau)$ with Lie algebra $\g(\tau,0)$. Since $[\g(\tau, 0),e]$ is contained in the tangent space $T_e\big((\Ad L(\tau))\,e\big)$, the $(\Ad L(\tau))$-orbit of $e$ is Zariski open in
$\g(\tau,2)$ and hence intersects with $\V(\tau,2)_{ss}\ne \varnothing$. This yields $e\in \H(\Delta)$.	
\end{proof}
\subsection{}\label{4.2} 
Given a connected reductive $\k$-group $H$ and a nilpotent element $e\in \Lie(H)$ (i.e. an unstable vector of the $(\Ad H)$-module $\Lie(H)$), we write $\hat{\Lambda}_H(e)$ for the set of all
cocharacters in $X_*(H)$ optimal for $e$ in the sense of the  Kempf--Rousseau theory. By [\cite{Hes78}, Theorem~7.2], this set of cocharacters does not depend on the choice of an $H$-invariant 
$\mathbb{R}_{\ge 0}$-valued norm mapping on $X_*(H)$. 

Let $\tau=\tau_\Delta$, where $\Delta\in\mathfrak{D}_{G}$, and let $e\in \g(\tau,2)$ be such that $G_{e}\subset P(\tau)$. As $e$ is a $G$-unstable vector of $\g$ it affords an optimal cocharacter $\tau'\in X_*(G)$
with the property that $e\in\g(\tau',\ge 2)$; see [\cite{CP}] for detail. Since $\Ad \tau(\k^\times)$ preserves the line $\k e$ and the cocharacter $\tau'$ is optimal for any nonzero scalar multiple of $e$, it follows from the main results of the Kempf-Rousseau theory that $\Ad \tau(\k)$ normalises the optimal parabolic subgroup $P(\tau')$ of $e$; see [\cite{P03}, Theorem~2.1(iv)], for example. 
Since
the latter is self-normalising and contains $G_{e}$ we have the inclusion $\tau(\k^\times)G_{e}\subset P(\tau')$. 
Hence $N_e:=N_{G}(\k e)=\tau(\k^\times)G_{e}$ is a subgroup of $P(\tau')$.
It follows from [\cite{Bor}, 11.14(2)] (applied to tori) that for any maximal torus $D$ of $G_{e}$ there is a maximal torus $\tilde{D}$ of $N_e$ such that $D\subseteq \tilde{D}\cap G_{e}$. Since 
$\tau(\k^\times)$ is contained in a maximal torus of $N_e$ and  
all maximal tori of $N_e$ are conjugate, this shows that $G_{e}$ contains a maximal torus which commutes with $\tau(\k^\times)$; we call it $T_0$. 

 As $N_e\subset P(\tau')$, there is a maximal torus $T'$ of $P(\tau')$ which contains the maximal torus $\tau(\k^\times)T_0$ of $N_e$. Since $L:=Z_{G}(T_0)$ is a Levi subgroup of $G$, there exists $g\in G$  such that
 $gLg^{-1}$ is a standard Levi subgroup $L$ of $G$.
 Replacing $e$ and $\tau$ by $(\Ad g)\,e$ and $g\tau g^{-1}$  we may assume without loss that $L$ is a standard Levi subgroup of $G$. 
 By the main results of the Kempf--Rousseau theory (and the description of Hesselink strata in [\cite{CP}]), there exists a unique cocharacter $\tau''\in \hat{\Lambda}_{G}(e)\cap X_*(T')$ conjugate to $\tau'$ under $P(\tau')$ and
 such that $e=\sum_{i\ge 2}\,e_i$ where $e_i\in  \g(\tau'', i)$ and $e_2\in\V(\tau,2)_{ss}$; see [\cite{P03}, Theorem~2.1(iii)].  

 By construction, $T'$ is a maximal torus of $L$ containing $\tau(\k^\times)$ and $\tau''(\k^\times)$. Furthermore, $e\in\g^{\Ad T_0}=\,\Lie(L)$. Let $L'=\mathcal{D}L$, the derived subgroup of $L$, and $\mathfrak{l}'=\Lie(L')$. It is immediate from Jacobson's formula for $[p]$-th powers in the restricted Lie algebra $(\mathfrak{l}, [p])$ that $e\in\mathfrak{l}'$. 
\subsection{}\label{4.3} Let $d=\dim T_0$ and $r=\ell - d$ where $\ell={\rm rk}\, G$. In this subsection we begin to investigate the case where $d>0$, i.e. the case where  $e$ is not distinguished in $\g$.
The group $L'=\mathcal{D}L$  is semisimple and $T'\cap L'$ contains a maximal torus of $L'$; we call it $T_1$. The subtorus $T_0\cdot T_1$ of $T'$ being self-centralising, it must be that $T'=T_0\cdot T_1$.  As the central subgroup $T_0\cap L'$ of $L'$ is finite, we have  a direct sum decomposition 
\begin{equation}\label{E2}
X(T')_\mathbb{Q}\,=\,X(T_0)_\mathbb{Q}\oplus X(T_1)_\mathbb{Q}
\end{equation}
of the $\mathbb{Q}$-spans of $X(T_0)$ and $X(T_1)$ in $X(T')_\mathbb{Q}=X(T')\otimes_{\Z}\mathbb{Q}$. This shows that $${\rm rk}\,L'=\dim T_1=
\ell-\dim_{\mathbb Q} X(T_0)_{\mathbb Q}=\ell-d=r.$$
Since we identify the dual spaces 
$X(T')_\mathbb{Q}$ and $X_*(T')_\mathbb{Q}=X_*(T')\otimes_{\Z}\mathbb{Q}$ by means of a $W$-invariant inner product $(\,\cdot\,|\,\cdot\,)$ we have that $\eta(\mu(t))=t^{(\mu\,|\,\eta)}$ for all  $\eta\in X(T)$, $\mu\in X_*(T')$, and $t\in \k^\times$.
As the subgroup $N_{L}(T')/T'$ of $W$ acts trivially on $X(T_0)$ and has no nonzero fixed points on $X(T_1)$, the $\mathbb{Q}$-spans $X_*(T_0)_\mathbb{Q}$ and $X_*(T_1)_\mathbb{Q}$ are orthogonal to each other with respect to $(\,\cdot\,|\,\cdot\,)$.
\begin{lemma}\label{L2}
Suppose $\nu\in X_*(T')$ is such that $e\in \g(\nu,2)$ and $(G_{e})^\circ\subset P(\nu)$. If the group $(G_e)^\circ/R_u(G_{e})$ is semisimple then $\nu=\tau$.	
\end{lemma}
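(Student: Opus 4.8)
\textbf{Proof proposal for Lemma~\ref{L2}.}
The plan is to compare two cocharacters that are both optimal for $e$ and both centralise $T_0$, and to use the uniqueness part of the Kempf--Rousseau theory. First I would observe that the hypotheses on $\nu$ say precisely that $\nu$ is an ``admissible'' cocharacter for $e$ in the sense that $e\in\g(\nu,2)\setminus\g(\nu,\ge 3)$ (the top-degree condition being automatic once $e\in\g(\nu,2)$) and that the identity component of $G_e$ lies in $P(\nu)$; together with $\nu\in X_*(N_e)$ this forces $N_e^\circ\subset P(\nu)$. By the discussion in \ref{4.2}--\ref{4.3}, the cocharacter $\tau=\tau_\Delta$ enjoys the same properties: $e\in\g(\tau,2)$, $G_e\subset P(\tau)$, and $\tau(\k^\times)$ commutes with $T_0$, so $\tau\in X_*(T')\cap X_*(N_e)$ as well. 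Thus both $\tau$ and $\nu$ are candidates for the optimal class.

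Next I would invoke the semisimplicity hypothesis on $G_e/R_u(G_e)$. The point of this assumption is that it pins down the optimal cocharacter up to $G_e$-conjugacy in a rigid way: when the reductive quotient of $G_e$ is semisimple, the Kempf--Rousseau optimal cocharacter for $e$, restricted to lie in a given maximal torus of $N_e$, is \emph{unique} (there is no central torus of $G_e$ left to twist by, so the normaliser $N_e=\tau(\k^\times)G_e$ has a one-dimensional central torus $\tau(\k^\times)$ which must coincide, up to the finite centre, with the canonical destabilising direction). Concretely, by [\cite{P03}, Theorem~2.1(iii)] there is a unique element of $\hat\Lambda_G(e)\cap X_*(T')$ with $e\in\g(\cdot,\ge 2)\setminus\g(\cdot,\ge 3)$; I would show that both $\tau$ and $\nu$ project to this element after rescaling, and that the semisimplicity of the reductive quotient removes the remaining freedom, so $\tau=\nu$ outright rather than merely $\tau\sim\nu$.

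The mechanism I would use to identify $\nu$ with an optimal cocharacter is the following. Since $N_e^\circ=\tau(\k^\times)(G_e)^\circ\subset P(\nu)$ and $P(\nu)$ is self-normalising, $\nu$ normalises $P(\nu)$ and hence lies in the same maximal torus $T'$ as $\tau$ and $\tau''$; under the decomposition $X_*(T')_\mathbb{Q}=X_*(T_0)_\mathbb{Q}\oplus X_*(T_1)_\mathbb{Q}$ of \eqref{E2} I would argue that the $X_*(T_0)_\mathbb{Q}$-components of both $\tau$ and $\nu$ vanish: indeed $e\in\Lie(L')$ and $e$ generates (in the $[p]$-sense, using Jacobson's formula) a subalgebra on which the central torus $T_0$ acts trivially, so any cocharacter with $e\in\g(\cdot,2)$ that we may assume (after the normalisation of \ref{4.2}) lies in $X_*(T_1)_\mathbb{Q}$, the cocharacter lattice of the semisimple group $L'$. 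Within $L'$ the element $e$ is distinguished (that is its role in the reduction of \ref{4.3}), so inside $L'$ the cocharacter realising $e$ in degree $2$ is unique; hence $\tau$ and $\nu$ have equal $T_1$-components and equal (namely zero) $T_0$-components, giving $\tau=\nu$.

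\emph{Main obstacle.} The delicate point is the step where the semisimplicity of $G_e/R_u(G_e)$ is converted into genuine equality $\tau=\nu$ rather than conjugacy: one must check that no cocharacter of the central torus of $L$ (equivalently of $T_0$) can be added to $\nu$ while preserving both $e\in\g(\nu,2)$ and $(G_{\k,e})^\circ\subset P(\nu)$, and this is exactly where the hypothesis that the reductive quotient has no central torus is essential --- otherwise $\nu$ could differ from $\tau$ by such a central twist. Making this rigidity argument precise, and in particular showing that the top-degree condition $e\notin\g(\nu,\ge 3)$ together with $(G_{\k,e})^\circ\subset P(\nu)$ forces $\nu$ into the optimal class $\hat\Lambda_G(e)$ and not merely into some larger family, is the heart of the matter; once that is done, the uniqueness in [\cite{P03}, Theorem~2.1(iii)] closes the argument.
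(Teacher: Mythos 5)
There is a genuine gap, and it sits exactly where you flag your ``main obstacle''. Your route runs through the Kempf--Rousseau optimal class $\hat{\Lambda}_{G}(e)$, but nothing in the hypotheses places $\nu$ (or $\tau$) in that class: the conditions $e\in\g(\nu,2)$ and $(G_{e})^\circ\subset P(\nu)$ are precisely the input of statement (\ref{S1}), whose whole difficulty is that they do \emph{not} obviously force $e$ to be $Z_G(\nu)$-semistable in $\g(\nu,2)$, i.e.\ do not obviously make $\nu$ optimal. Deducing optimality of $\nu$ from these hypotheses is therefore essentially the theorem being proved, and you leave that step unproved. Worse, the auxiliary claim in your third paragraph is false: since $T_0$ is a maximal torus of $G_{e}$ it fixes $e$, so for \emph{any} $\mu_0\in X_*(T_0)$ the cocharacter $\nu+\mu_0$ still has $e$ in degree $2$. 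Thus the degree-$2$ condition gives no control at all of the $X_*(T_0)_\mathbb{Q}$-component --- that freedom is exactly what must be killed, and your assertion that any cocharacter with $e$ in degree $2$ lies in $X_*(T_1)_\mathbb{Q}$ is backwards. (Your uniqueness argument for the $T_1$-component of a cocharacter attached to the distinguished element $e$ of $\fl'$ is fine, but that is not where the difficulty lies; note also that only the optimal $\tau''$ of \ref{4.4}, not $\tau$ itself, is known a priori to have vanishing $T_0$-component.)

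The paper's proof uses no optimality at all. It first notes that $\nu^{-1}(t)\tau(t)\in T'\cap G_{e}$ and $T_0=(T'\cap G_{e})^\circ$, so $\nu-\tau\in X_*(T_0)$. The hypothesis $(G_{e})^\circ\subset P(\nu)$ is then exploited as follows: it forces $\Ad\nu$ to act with nonnegative weights on $\Lie(S_{e})$, and since the roots of the reductive group $\tilde{S}_{e}=\nu(\k^\times)S_{e}$ with respect to its maximal torus $\nu(\k^\times)T_0$ come in pairs $\pm\bar{\gamma}$, the torus $\nu(\k^\times)$ must be central in $\tilde{S}_{e}$; the same argument applies to $\tau$. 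Semisimplicity of $S_{e}$ enters quantitatively: $S_{e}$ has rank $d=\dim T_0$, so its simple roots $\gamma_1,\ldots,\gamma_d\in X(T_0)$ span $X(T_0)_\mathbb{Q}$, and centrality gives $\gamma_i(\nu(t))=\gamma_i(\tau(t))=1$ for all $i$. Hence $\nu-\tau$ is orthogonal to all of $X(T_0)_\mathbb{Q}$ while lying in $X_*(T_0)\subseteq X(T_0)_\mathbb{Q}$, forcing $\nu=\tau$. If you want to repair your write-up, this root-theoretic rigidity is the missing ingredient; the appeal to [\cite{P03}, Theorem~2.1(iii)] cannot substitute for it.
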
 
\begin{proof}
As $T_0\subseteq T'\cap G_e$ is a maximal torus of $G_e$ it must be that
$T_0=(T'\cap G_{e})^\circ$. As $\nu^{-1}(t)\cdot\tau(t)\in T'\cap G_{e}$ for all $t\in\k^\times$ 
we have that $\nu-\tau\in X_*(T_0)$.  As $R_u(G_{e})$ is a unipotent group, the torus $T_0\subset G_{e}$ maps isomorphically onto  a maximal torus of the semisimple group  $S_{e}:=(G_{e})^\circ/R_u(G_{e})$. It follows that ${\rm rk}\,S_{0}=\dim T_0=d.$
We identify $T_0$ with its image in $S_{e}$. As
$\nu(\k^\times)\subset N_e$ normalises $R_u(G_{e})$, it acts on $S_{e}$ by rational automorphisms. 

It is straightforward to see that the connected algebraic group
$\tilde{S}_{e}:=\nu(\k^\times)S_{e}$ is reductive and $\nu(\k^\times)T_0$ is a maximal torus of $\tilde{S}_{e}$. So, if $\bar{\gamma} \in X\big(\nu(\k^\times)T_0\big)$ is a root of $\tilde{S}_{e}$ then so is $-\bar{\gamma}$. On the other hand, our assumption on $\nu$ implies that $\Ad \nu$ has nonnegative weights on $\Lie(S_{e})$. This entails that $\nu(\k^\times)$ is a central torus of $\tilde{S}_{e}$. Repeating this argument with $\tau$ in place of $\nu$ we deduce that the torus $\nu(\k^\times)$ is central in $\tilde{S}_{e}$ as well.    

 Since ${\rm rk}\,S_{e}=d$ there are $\mathbb{Q}$-independent
weights $\gamma_1,\ldots, \gamma_d\in X(T_0)$ which serve as a basis of simple roots for the root system of $\tilde{S}_{e}$ with respect to $T_0$. 
As $\dim X(T_0)_\mathbb{Q}=d$, they must form a basis of the vector space $X(T_0)_\mathbb{Q}$.
On the other hand, the preceding discussion shows that
$\gamma_i(\nu(t))=
\gamma_i(\tau(t))=1$ for all $t\in\k^\times$ and $i\le d$. Our identification of $X(T')_\mathbb{Q}$ and $X_*(T')_\mathbb{Q}$ now yields that $\nu-\tau$ is orthogonal to 
$X(T_0)_\mathbb{Q}$ with respect to $(\,\cdot\,|\,\cdot\,)$. Since $\nu-\tau\in X_*(T_0)\subseteq X(T_0)_\mathbb{Q}$ this forces
$\nu-\tau=0$, completing the proof.
\end{proof}
\subsection{}\label{4.4} Recall from \ref{4.2} that our nilpotent element $e\in\mathfrak{l}'$ affords an optimal cocharacter $\tau''\in \hat{\Lambda}_{G}(e)\cap X_*(T')$ such that $e=\sum_{i\ge 2}\,e_i$ with $e_i\in \fl'(\tau'', i)$ and $e_2\in\V(\tau'',2)_{ss}$. 
Identifying $X_*(T')_\mathbb{Q}$ and $X(T')_\mathbb{Q}$ as in \ref{4.3} and using (\ref{E2}) we get $\tau''=\tau''_0+\tau''_1$ where $\tau''_0\in X(T_0)_\mathbb{Q}$ and $\tau''_1\in X(T_1)_\mathbb{Q}$. Since 
$(\tau''\,|\,\tau'')=(\tau''_0\,|\,\tau''_0)+(\tau''_1,\tau''_1)\ge (\tau''_0\,|\,\tau''_0)$
and $(\tau''_0\,|\,\gamma)=0$ for all $\gamma\in X(T_1)$, 
it follows from the optimality of $\tau''$ that $\tau''_0=0$; see [\cite{P03}. p.~348] for a similar (characteristic--free) argument. 

As $\tau''\in X_*(T')$
we thus obtain that $\tau''$ is an optimal cocharacter for $e$ contained in  $X_*(T_1)$.
Therefore, the orbit $\OO_L(e):=(\Ad L)\,e$ is contained in the Hesselink stratum $\H_L(\tau'')$
of the nilpotent cone $\NN(\fl')$, the variety  of all $(\Ad L)$-unstable vectors of $\fl'$.
Since $T_0$ is a maximal torus of $G_{e}$ the group $(L_e)^\circ$ is unipotent. In other words, $e$ is a distinguished nilpotent element of $\fl'$. 
\begin{lemma}\label{L3}
If the $(\Ad L)$-orbit $\OO_L(e)$ coincides with its stratum  $\H_L(\tau'')$ and the group
$(G_e)^\circ/R_u(G_{e})$ is semisimple, then $\tau$ is $L$-conjugate to $\tau''$ 
and  $e\in \H(\Delta)$.
 \end{lemma}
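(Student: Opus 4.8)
The plan is to manufacture, from the hypothesis $\OO_L(e)=\H_L(\tau'')$, a cocharacter $\nu$ satisfying all the assumptions of Lemma~\ref{L2} and $L$-conjugate to $\tau''$. Lemma~\ref{L2} then forces $\nu=\tau$, so $\tau$ is $L$-conjugate to $\tau''$; and since $\tau''\in\hat{\Lambda}_G(e)$ by \ref{4.4}, the $G$-conjugacy of $\tau=\tau_\Delta$ and $\tau''$ makes $\tau$ optimal for $e$ up to $G$-conjugacy, so $e$ lies in the Hesselink stratum indexed by $\Delta$, i.e.\ $e\in\H(\Delta)$.

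The first step is to pick a good representative of $\OO_L(e)$. Since $\H_L(\tau'')=(\Ad L)\big(\V(\tau'',2)_{ss}+\bigoplus_{i\ge 3}\fl'(\tau'',i)\big)$ contains $e$, the open set $\V(\tau'',2)_{ss}$ is nonempty, and being contained in $\H_L(\tau'')=\OO_L(e)$ it yields $e_1\in\V(\tau'',2)_{ss}$ with $e_1=(\Ad\ell)\,e$ for some $\ell\in L$. Set $\mu:=\ell^{-1}\tau''\ell\in X_*(L)$. As $e_1\in\fl'(\tau'',2)$ is homogeneous of $\tau''$-degree $2$, we get $e\in\g(\mu,2)$, hence $\mu(\k^\times)$ rescales $e$ and $\mu\in X_*(N_e)$. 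The delicate point is $G_e\subset P(\mu)$: since $\tau''\in\hat{\Lambda}_G(e)\cap X_*(L')$ it is also optimal for $e$ in $L'$ (this is what underlies the inclusion $\OO_L(e)\subset\H_L(\tau'')$ in \ref{4.4}), so $\ell\tau''\ell^{-1}$ is optimal for $e_1$ in $L'$, while $e_1\in\V(\tau'',2)_{ss}$ forces $\tau''$ itself to be optimal for $e_1$ in $L'$; the Kempf--Rousseau theory then makes $\tau''$ and $\ell\tau''\ell^{-1}$ conjugate under the instability parabolic $P_{L'}(\tau'')$, which puts $\ell$ in $P_{L'}(\tau'')\subset P(\tau'')$. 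Hence $P(\mu)=\ell^{-1}P(\tau'')\ell=P(\tau'')\supset G_e$.

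Next I would move $\mu$ into $X_*(T')$. The torus $\mu(\k^\times)$ lies in some maximal torus $S$ of $N_e$, and since $T'\cap N_e=\tau(\k^\times)T_0$ is likewise a maximal torus of $N_e$, there is $g\in N_e$ with $gSg^{-1}=\tau(\k^\times)T_0$, so $\nu:=g\mu g^{-1}\in X_*(T')$. Every element of $N_e$ only rescales $e$, so conjugating by $g$ preserves $e\in\g(\mu,2)$; thus $e\in\g(\nu,2)$, $\nu(\k^\times)\subset N_e$, and, because $N_e$ normalises $G_e$ and sends $P(\mu)$ to $P(\nu)$, we retain $(G_e)^\circ\subset P(\nu)$. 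As $G_e/R_u(G_e)$ is semisimple, Lemma~\ref{L2} applies and gives $\nu=\tau$. Writing $g=\tau(s)z$ with $z\in G_e$ and using that $\tau(s)$ commutes with $\tau$, the equality $\tau=\nu=g\mu g^{-1}$ collapses to $\tau=(z\ell^{-1})\,\tau''\,(z\ell^{-1})^{-1}$ with $z\in G_e\subset P(\tau'')$ and $\ell^{-1}\in P(\tau'')$; one final bookkeeping step — correcting the conjugating element by an element of $(G_e)^\circ$ normalising $T_0$ (via conjugacy of maximal tori in $G_e$ and the fact that $N_G(T_0)$ normalises $L=Z_G(T_0)$) and then by an element of $N_L(T')$ — upgrades this to $L$-conjugacy of $\tau$ and $\tau''$, and $e\in\H(\Delta)$ follows as above.

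The main obstacle I anticipate is not the existence of $\nu$ but the two inclusions that make Lemma~\ref{L2} applicable: that the cocharacter $\mu$ attached to a semistable representative satisfies $G_e\subset P(\mu)$ (which comes down to placing $\ell$ inside $P_{L'}(\tau'')$), and that one conjugation by $N_e$ suffices to bring $\mu$ into $X_*(T')$ while preserving that inclusion — plus the concluding passage from $G$- to $L$-conjugacy. All of these rely on carefully tracking how $N_e$, $G_e$ and the torus $T_0$ interact, and this is the part of the argument where the hypothesis $G_e/R_u(G_e)$ semisimple really comes in.
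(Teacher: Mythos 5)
Your argument is essentially the paper's: both proofs extract from $\OO_L(e)=\H_L(\tau'')$ a semistable degree-$2$ representative of the orbit in $\fl'(\tau'',2)$, use Kempf--Rousseau optimality to place the relevant centraliser inside the instability parabolic of $\tau''$, and then feed the semisimplicity of $G_e/R_u(G_e)$ into Lemma~\ref{L2} to identify the cocharacters and conclude $e\in\H(\Delta)$ -- the paper applies Lemma~\ref{L2} at the representative $e'$ with $\nu=\tau''$, while you transport the cocharacter back to $e$, a mirror image of the same argument. The one loose spot is your closing upgrade from $G$- to $L$-conjugacy of $\tau$ and $\tau''$, which remains a sketch as written; it becomes unnecessary if you observe that $\mu=\ell^{-1}\tau''\ell$ centralises $T_0$ (since $\tau''(\k^\times)\subset T'\subset L$ and $\ell\in L=Z_G(T_0)$), so the element $g$ conjugating $\mu$ into $X_*(\tau(\k^\times)T_0)\subset X_*(T')$ may be chosen in $Z_{(N_e)^\circ}(T_0)\subseteq L$, whence Lemma~\ref{L2} yields $\tau=(g\ell^{-1})\,\tau''\,(g\ell^{-1})^{-1}$ with $g\ell^{-1}\in L$ directly.
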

 \begin{proof}
 	Recall that $e=\textstyle{\sum}_{i\ge 2}\,e_i$ where $e_i\in\fl'(\tau'',i)$ and $e_2\in \V(\tau'',2)_{ss}$.
 As one of the $(\Ad L)$-orbits of $\H_L(\tau'')$ intersects with $\fl'(\tau'',2)$, our assumption on $\H_L(\tau'')$ implies that $e$ is $(\Ad L)$-conjugate to $e_2$. 
 Since $\tau''\in\hat{\Lambda}_{G}(e_2)$ we have $G_{e_2}\subset P(\tau'')$.
 As the group $(G_{e_2})^\circ/R_u(G_{e_2})\,\cong\, (G_{e})^\circ/R_u(G_{e})$ is semisimple, it follows from Lemma~\ref{L2} that $\tau$ is $L$-conjugate to $\tau''$. As $\tau''$ is 
 $G$-conjugate to $\tau_\Delta$ by our discussion in \ref{4.2}, we deduce that $e_2\in \H(\Delta)$. But then $e\in \H(\Delta)$ as wanted.
 	\end{proof}
 Let $e\in\NN(\g)$ and write $S_{e}$ for the factor-group $(G_{e})^\circ/R_u(G_{e})$. When ${\rm char}(\k)=0$, one can use 
 the tables in [\cite{Ca}, pp.~401--407] to quickly compile a full list of the nilpotent orbits $\OO(e)=(\Ad G)\,e$ 
 for which $S_{e}$ is a semisimple group. Then one can use [\cite{LT}] to find out that the same list is still valid in good characteristic. Since we are mainly concerned with
 the case where $p={\rm char}(\k)$ is very bad for $G$, we must rely instead on the following important classification result obtained in [\cite{LS}].	
 	\begin{prop}{\rm ([\cite{LS}])}\label{P2}
 	Suppose $G$ is exceptional and $e$ is not distinguished in $\g$. Then either $S_{e}$ is a semisimple group or $\OO(e)$  has one of the following labels:
 	\begin{enumerate}
 		
 		\smallskip
 		
 		\item[] ${\rm Type\, E_6}:$ $\ {\rm A_1^2,\, A_2A_1,\, A_2A_1^2,\,A_3,\,A_3A_1,\,D_4(a_1),\,A_4,\, A_4A_1,\,D_5(a_1),\,D_5}$.
 		
 		\medskip
 		
 		\item[] ${\rm Type\, E_7}:$ $\ {\rm A_2A_1,\,A_3A_2\,\,(\mbox{$p\ne 2$}),\,A_4,\,A_4A_1,\, D_5(a_1),\,E_6(a_1)}$.
 		
 		\medskip
 			
 		\item[] ${\rm Type\, E_8}:$ $\ {\rm A_3A_2\,\, (\mbox{$p\ne 2$}), \,\,A_4A_1,\,A_4A_1^2,\,D_5A_2\,\,(\mbox{$p\ne 2$}),\,D_7(a_2)}$,
 		
 		\smallskip
 		
 		\item[]\qquad\qquad \,\,   ${\rm E_6(a_1)A_1,\,D_7(a_1)\,\, (\mbox{$p\ne 2$}).}$
 	\end{enumerate}
 	If $G$ is of type ${\rm G}_2$ or ${\rm F}_4$ then $S_{e}$ is a semisimple 
 	group for any $e\in\NN(\g)$.
 	\end{prop}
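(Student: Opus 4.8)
The plan is to treat this as a classification that we extract from the known structure of centralisers of nilpotent elements, rather than prove from scratch: the real content is that [\cite{LS}] have computed these centralisers in bad characteristic, and our task is to locate and collate their data. First I would reformulate the condition on $S_{e}$. Since $e$ is not distinguished, a maximal torus $T_0$ of $G_{e}$ has positive dimension $d$ (notation of \ref{4.2}), so $S_{e}=G_{e}/R_u(G_{e})$ is a reductive group of rank $d>0$; it fails to be semisimple precisely when its connected centre $Z(S_{e})^{\circ}$ is a nontrivial torus, equivalently when the semisimple rank of $S_{e}$ is strictly smaller than $d$. Everything therefore comes down to deciding, orbit by orbit, whether the reductive group $G_{e}/R_u(G_{e})$ has a nontrivial central torus --- which is exactly the kind of information recorded in the tables of [\cite{Ca}], [\cite{LT}] and [\cite{LS}].

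Next I would dispose of the case $p=0$ or $p$ good. For $p=0$ the reductive part of the centraliser of a nilpotent element of $\g_{\bC}$ is listed in [\cite{Ca}, pp.~401--407]; reading off the relevant column gives exactly the non-distinguished orbits whose reductive centraliser carries a central torus, and by [\cite{LT}] the centraliser structures --- hence this list --- persist unchanged in every good characteristic. It then remains to go through the bad primes, namely $p=2$ and $p=3$ in types ${\rm E}_6$, ${\rm E}_7$, ${\rm E}_8$, ${\rm F}_4$, ${\rm G}_2$, together with $p=5$ in type ${\rm E}_8$. Here I would consult the tables of [\cite{LS}], which record for each nilpotent orbit the structure of $G_{e}/R_u(G_{e})$ --- its semisimple type, its component group, and the dimension of its central torus --- and single out the orbits for which that central torus is nontrivial. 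Taking the union of the orbits obtained for $p=0$, for good $p$, and for each bad prime, and noting that for types ${\rm G}_2$ and ${\rm F}_4$ every entry has trivial central torus, one arrives at precisely the list displayed in the statement.

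The hard part is not conceptual but bookkeeping, and there are three points that need care. First, one must not confuse $G_{e}$ with $\g_{e}=\Lie(G_{e})$, nor a reductive group with its identity component: in bad characteristic the component group and, more significantly, the {\it dimension} of $G_{e}$ can jump relative to good characteristic, so the good-characteristic list cannot simply be transported to $p=2,3,5$ --- each bad prime has to be checked against [\cite{LS}] directly. Second, one must reconcile the Bala--Carter-type labelling of orbits used by [\cite{LS}] in bad characteristic with that of [\cite{Ca}]; the labels agree for the orbits at issue, which is precisely why the only parenthetical restrictions needed in the statement are of the form ``$p\ne 2$''. Third, one has to attend to those orbits whose reductive centraliser changes type at a bad prime --- notably the ones bearing the restriction ``$p\ne 2$'', such as ${\rm A}_3{\rm A}_2$, where at $p=2$ the group $G_{e}/R_u(G_{e})$ turns out to be semisimple and the orbit drops out of the list. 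The genuinely difficult work --- the determination of $G_{e}/R_u(G_{e})$ in very bad characteristic --- is supplied by [\cite{LS}], and our contribution here is only to organise it and read off the answer.
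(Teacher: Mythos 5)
Your proposal is correct and is essentially the paper's own argument: the paper proves the proposition simply by examining Tables~22.1.1--22.1.5 of [\cite{LS}], which record $C_G(e)/R_u(C_G(e))$ for every orbit and every characteristic, and reading off the non-distinguished orbits whose reductive quotient has a nontrivial central torus. Your detour through [\cite{Ca}] and [\cite{LT}] for $p=0$ and good $p$ is exactly the alternative route the paper mentions in the discussion immediately preceding the proposition, so there is no substantive difference in method.
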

\begin{proof}
	The statement
	is obtained by examining Tables~22.1.1\,--\,22.1.5 in [\cite{LS}].
	 One also observes in the process that if a nilpotent orbit $\OO(e)$ coincides with its Hesselink stratum then the type of $S_{e}$ is independent of the characteristic of $\k$. This is curious, but will not be required in what follows. 
\end{proof}
\subsection{}\label{4.5} 
It is well known that  $\dim\g_{e}\ge \dim G_{e}$ for any $e\in \g$, and if the equality
$\dim\g_{e}=\dim G_{e}$ holds, the orbit $\OO(e)$ is called {\it smooth}.
For all exceptional types, the smooth nilpotent orbits of $\g$ can be determined from Stewart's tables [\cite{St}] which record the Jordan blocks of all $\ad e$ 
with $e\in\NN(\g)$. We note that the representatives of nilpotent orbits used in Stewart's tables are compatible with those in [\cite{LS}, Tables~12.1, 13.3 and 14.1].
\begin{rem}\label{R3} Suppose $G$ is exceptional.
Although Liebeck and Seitz do not discuss Hesselink strata in [\cite{LS}], they can be spotted as follows. For each representative $e\in\NN(\g)$ listed in the tables of [\cite{LS}] there exists a cocharacter
$\mu\in X_*(G)$ conjugate to $\tau_\Delta$ with $\Delta\in\mathfrak{D}_{G}$ and such that $e\in \g(\mu,\ge 2)$ and $G_{e}\subset P(\mu)$. Of course, in characteristic $2$ and $3$ there are a few special cases where two representatives of different orbits, say $e$ and $\tilde{e}$, are attached to the same $\mu$ (and the same $\Delta$). 
If there is no $\tilde{e}$, then $e$ is homogeneous (that is, lies in $\g(\mu,2)$) and
$\overline{(\Ad P(\mu))\,e}=\g(\mu,\ge 2)$. Then the orbit $(\Ad Z_{G}(\mu))\,e$ is dense in $\g(\mu,2)$ and the description of Hesselink strata in [\cite{CP}] implies that
$e$ is $Z_{G}(\mu)$-semistable. This is an excellent case since $\H(\Delta)=\OO(e)$ is a single orbit and $\dim G_{e}=
\dim \g(\mu,0)+\dim\g(\mu,1)$ is independent of $p$.

If $\tilde{e}$ does exist then one may assume without loss that $e\in \g(\mu, 2)$ and $\tilde{e}=e+e_\beta$ for some root vector $e_\beta\in\g(\mu,d)$ with $d\ge 2$. Moreover, $\tilde{e}$ always lies in the new orbit with label $(\Delta)_p$ and
$(\Ad P(\mu))\,\tilde{e}$ is Zariski dense in $\g(\mu,\ge 2)$. Then $\dim G_{\tilde{e}}=
\dim \g(\mu,0)+\dim\g(\mu,1)$, but $\dim G_{e}> \dim G_{\tilde{e}}$.

If $d>2$ then it is still true that the orbit $(\Ad Z_{G}(\mu))\,e$ is dense in $\g(\mu,2)$. Therefore, both $e$ and $\tilde{e}$ lie in the stratum $\H(\Delta)$.
In fact, it follows from the main results of [\cite{LS}] that $\H(\Delta)=\OO(e)\cup \OO(\tilde{e})$. So this case is not too bad either.

In order to describe the strata of $\H(\Delta)$ explicitly, one has to clarify the remaining (problematic) case where $\tilde{e}=e+e_\beta$ and $d=2$. Here $\g(\mu,2)$ contains both $e$ and $\tilde{e}$, but the orbit $(\Ad Z_{G}(\mu))\,e$ is no longer dense in $\g(\mu,2)$. So we cannot conclude at this point that $e$ is $Z_{G}(\mu)$-semistable.  
However, since $\mu$ is $G$-conjugate to $\tau_\Delta$ and it is shown in [\cite{LS}] that $G_{e}\subset P(\mu)$,
we see that $e\in {\rm LX}(\Delta)$. Therefore, the main results of [\cite{LS}] together with Theorem~\ref{main} 
provide a very satisfactory description of the Hesselink strata of $\NN(\g)$ for $G$ exceptional. 
Namely, if there is no $\tilde{e}$ then $\H(\Delta)=\OO(e)$ and if $\tilde{e}$ does exist then
$\H(\Delta)=\OO(e)\cup \OO(\tilde{e})$.
\end{rem} 
\begin{rem}\label{R4} Suppose $e$ is as in Remark~\ref{R3} and $\OO(e)=\H(\Delta)$.  Then $\dim G_{e}=\dim \g(\mu,0)+\dim\g(\mu,1)$ is independent of the characteristic of $\k$. Since the representatives $e$ used in [\cite{St}] agree with those of [\cite{LS}] one can compute $\dim G_{e}$ by counting the number of Jordan blocks of $\ad e$ under the assumption that $p\gg 0$. If the number obtained coincides with the actual number of Jordan blocks of $\ad e\in\End \g$ then the orbit $(\Ad G)\,e$ is smooth. 
	\end{rem}
Thanks to Proposition~\ref{P2} and Lemma~\ref{L1} we can reduce proving statement (\ref{S1}) to a much smaller number cases.
\begin{lemma}\label{L4}
	Suppose $\tau\in X_*(G)$ is conjugate to $\tau_\Delta$ with $\Delta\in\mathfrak{D}_{G}$ and $e\in\g(\tau,2)$  is such that $G_{e}\subset P(\tau)$. It the group  $S_{e}$ is not semisimple, then $e\in\H(\Delta)$.
\end{lemma}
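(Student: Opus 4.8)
\noindent\emph{Proof plan.} The idea is to deduce Lemma~\ref{L4} from Lemma~\ref{L1}, using Proposition~\ref{P2} to cut the problem down to a finite list of orbits. First I would dispose of the trivial cases: when $G$ has type ${\rm G_2}$ or ${\rm F_4}$, Proposition~\ref{P2} tells us that $S_{e}$ is semisimple for every $e\in\NN(\g)$, so the hypothesis of the lemma is never satisfied and there is nothing to prove. Hence one may assume $G$ has type ${\rm E_6}$, ${\rm E_7}$ or ${\rm E_8}$; in particular $\Sigma$ is simply laced.

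Next I would check that $\kappa$ induces a perfect pairing between $\g(\tau,2)$ and $\g(\tau,-2)$, so that Lemma~\ref{L1} becomes applicable. Since $\Sigma$ is simply laced, [\cite{CP}, Lemma~7.3] identifies the radical of $\kappa$ with the central toral subalgebra of $\g$, which is contained in $\g(\tau,0)$. As $\kappa$ is $\Ad\tau(\k^\times)$-invariant it pairs $\g(\tau,i)$ with $\g(\tau,-i)$ and kills $\g(\tau,i)$ against $\g(\tau,j)$ whenever $i+j\ne 0$; hence a vector of $\g(\tau,2)$ orthogonal to all of $\g(\tau,-2)$ is orthogonal to all of $\g$ and therefore lies in $\mathrm{rad}(\kappa)\cap\g(\tau,2)=0$. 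Thus the pairing is perfect. After conjugating the pair $(e,\tau)$ so that $\tau=\tau_\Delta$, Lemma~\ref{L1} will give $e\in\H(\Delta)$ as soon as we know that $\g_{e}=\Lie(G_{e})$, i.e. that the orbit $\OO(e)=(\Ad G)\,e$ is smooth; this is the only point left to settle.

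Finally I would prove smoothness. Because $S_{e}$ is not semisimple, $e$ is not distinguished in $\g$ (for distinguished $e$ the group $(G_{e})^\circ$ is unipotent and $S_{e}$ is finite), so Proposition~\ref{P2} applies and $\OO(e)$ is one of the finitely many labelled orbits listed there. For each such orbit I would read off from the tables of [\cite{LS}] a cocharacter $\mu$ conjugate to $\tau_\Delta$ with $e\in\g(\mu,2)$ and $G_{e}\subset P(\mu)$, and verify that $\OO(e)$ fills out the whole Hesselink stratum $\H(\Delta)$, i.e. that $e$ is not the non-generic member of a stratum that splits into two orbits; as explained in Remark~\ref{R3} this makes $\dim G_{e}=\dim\g(\mu,0)+\dim\g(\mu,1)$ independent of the characteristic, and one then compares it with $\dim\g_{e}$, the number of Jordan blocks of $\ad e\in\End\g$ recorded in Stewart's tables [\cite{St}], whose nilpotent-orbit representatives are compatible with those of [\cite{LS}] (cf.\ Remark~\ref{R4}). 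Running through the list, this equality $\dim\g_{e}=\dim G_{e}$ holds in every case, so $\OO(e)$ is smooth and Lemma~\ref{L1} yields $e\in\H(\Delta)$. I expect the main obstacle to be precisely this finite verification — in practice only type ${\rm E_7}$ in characteristic $2$ and type ${\rm E_8}$ in characteristics $2$ and $3$ require new work, since in good characteristic these orbits are classically known to be smooth — and, within it, the most delicate task will be confirming that none of the orbits on the list is a non-generic piece of a split stratum, so that Remark~\ref{R3} really gives $\OO(e)=\H(\Delta)$.
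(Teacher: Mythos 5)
Your overall strategy---reduce to the orbits in Proposition~\ref{P2} (the ${\rm G_2}$ and ${\rm F_4}$ cases being vacuous), use Remark~\ref{R3} to identify each such orbit with its Hesselink stratum, compute $\dim G_{e}$ characteristic-independently as in Remark~\ref{R4}, compare with the Jordan-block data in Stewart's tables to get $\dim\g_{e}=\dim G_{e}$, and then invoke Lemma~\ref{L1} (your verification of the perfect pairing via the radical of $\kappa$ in the simply laced case is fine)---is exactly the paper's first step. The genuine gap is your final assertion that the equality $\dim\g_{e}=\dim G_{e}$ holds for \emph{every} orbit on the list. It does not: for $G$ of type ${\rm E_6}$ with $p=2$ the orbits $\OO({\rm A_3A_1})$ and $\OO({\rm D_5})$ fail to be smooth, i.e. $\dim\g_{e}>\dim G_{e}$, so Lemma~\ref{L1} is unavailable there and your plan produces nothing for these two orbits. (Your remark that only ${\rm E_7}$, $p=2$ and ${\rm E_8}$, $p=2,3$ need new work concerns the overall theorem, given the earlier results of Lusztig--Xue and Voggesberger; inside this lemma the delicate cases are precisely these two ${\rm E_6}$, $p=2$ orbits.)

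The paper closes this gap by a separate hands-on argument which your proposal would need in some form. For $\OO({\rm A_3A_1})$ one may take $e=e_1+e_3+e_4+e_6$, regular nilpotent in a Levi of type ${\rm A_3A_1}$, and normalise $\tau$ to $(2,r,2,2,s,2)$. Explicit unipotent elements of $G_{e}$ then constrain $\tau$: the ${\rm A_1}$-subgroup generated by $x_{\pm\tilde{\alpha}}(t)$ for the highest root forces $(\tau\,|\,\tilde{\alpha})=0$, hence $s=-7-r$, while elements such as $x_{-\alpha_5}(t)$ and $x_\beta(t)x_{\beta'}(t)$ give vectors of $\Lie(G_{e})$ of known $\tau$-weight, which must be nonnegative since $G_{e}\subset P(\tau)$; this pins $r$ to the finite range $-7\le r\le -1$. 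One then lists the dominant $W$-conjugates of all candidates and checks against the table of weighted Dynkin diagrams that only one of them has the form $\tau_\Delta$, and for that $\Delta$ one indeed has $e\in\H(\Delta)$. An entirely analogous computation (with $e=e_1+\cdots+e_5$ and $\tau=(2,2,2,2,2,r)$, $-14\le r\le -6$) settles $\OO({\rm D_5})$. Without this, or some substitute for the non-smooth ${\rm E_6}$, $p=2$ cases, the proof of the lemma is incomplete.
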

	\begin{proof} Since $S_{e}$ is not semisimple, $e$ lies in one of the orbits 
listed in Proposition~\ref{P2}. Our discussion in Remark~\ref{R3} (based on results of [\cite{LS}]) shows that any such orbit $\OO(\Delta)$ coincides with the corresponding Hesselink stratum $\H(\Delta)$. 
Using [\cite{St}, Tables~10, 11, 12] and the method described in Remark~\ref{R4} one now checks directly that
$\dim \g_{e}=\dim G_{e}$ unless $G$ is of type ${\rm E_6}$, $p=2$, and $\OO(e)$ is one of 
$\OO({\rm A_3A_1})$ or $\OO({\rm D_5})$. If $\dim \g_{e}=\dim G_{e}$ then applying Lemma~\ref{L1} gives $e\in\H(\Delta)$.

Now suppose $G$ is of type ${\rm E_6}$ and $p=2$. In the remaining two cases $e$ is a regular nilpotent element of a Levi subalgebra of type ${\rm A_3A_1}$ or ${\rm D_5}$, hence no generality will be lost by assuming that $e$ is  as in [\cite{LT}, pp.~85, 91]. 

We first suppose that $e\in\OO({\rm A_3A_1})$. Then $e=e_1+e_3+e_4+e_6$.
Since $e\in\g(\tau, 2)$ and all maximal tori of $N_e$ are conjugate we may also assume 
that $\tau=(2,r,2,2,s,2)$ for some $r,s\in\Z$. If $\tilde{\alpha}=122321$, the highest root of $\Sigma$ with respect to $\Pi$, then $\{x_{\pm \tilde{\alpha}}(t)\,|\,\,t\in\k\}$ generate a subgroup of type ${\rm A_1}$
in $G_{e}$.  Since $G_{e}\subset P(\tau)$ it must be that $(\tau\,|\,\tilde{\alpha})=0$ forcing $14+2(r+s)=0$. Therefore, $\tau=(2,r,2,2,-7-r,2)$. Since
$x_{-\alpha_5}(t) \in G_{e}$ for all $t\in\k$ we also have $7+r\ge 0$. Let $\beta={\alpha_3+\alpha_4+\alpha_5+\alpha_6}$ and $\beta'={\alpha_1+\alpha_3+\alpha_4+\alpha_5}$. Then $\beta\pm\beta'\not\in\Sigma$ and $x_\beta(t)x_{\beta'}(t)\in G_{e}$ for all $t\in\k$ yielding $e_\beta+e_{\beta'}\in\Lie(G_{e})$. Hence $6-(7+r)\ge 0$, i.e. $r\le -1$. As a result, $r\in\{-7,-6,-5,-4,-3,-2,-1\}$.
For $r$ in this set we denote by $\tau_r$ the $W$-conjugate of $\tau=(2,r,2,2,-7-r,2)$ contained in $X_*^{+}(T)$. 

It is straightforward to check that $\tau_{-7}=(0,1,1,0,1,2)$, $\tau_{-6}=(1,1,0,0,1,2)$, $\tau_{-5}=(1,0,0,1,0,2)$,
$\tau_{-4}=(1,1,0,0,1,1)$, $\tau_{-3}=(0,1,1,0,1,0)$, $\tau_{-2}=(1,1,1,0,0,1)$, and $\tau_{-1}=(2,0,0,1,0,1)$. Using [\cite{Ca}, p.~402] we now observe that only $\tau_{-3}$
has form $\tau_{\Delta}$ with $\Delta\in\mathfrak{D}_G$. Furthermore,
$\Delta={\rm A_3A_1}$. Since $\OO({\rm A_3A_1})=\H({\rm A_3A_1})$ by Remark~\ref{R3}, we get $e\in\H(\Delta)$.

Finally, suppose $e\in\OO({\rm D_5})$. Then $e=e_1+e_2+e_3+e_4+e_5$. As $e\in\g(\tau,2)$ and all maximal tori of $N_e$ are conjugate we may assume that
$\tau=(2,2,2,2,2, r)$ for some $r\in\Z$. 
%Since $x_{-\alpha_6}(t) \in G_{e}$ we should have $r\le 0$. 
Let $\gamma=\alpha_3+\alpha_4+\alpha_5+\alpha_6$ and $\gamma'=\alpha_2+\alpha_4+\alpha_5+\alpha_6$. Then $\gamma\pm\gamma'\not\in\Sigma$ and $x_{-\gamma}(t)x_{-\gamma'}(t)\in G_{e}$ for all $t\in\k$. It follows that $e_{-\gamma}+e_{-\gamma'}\in\Lie(G_{e})$. Hence $r\le -6$. Now let $\delta=\alpha_1+\alpha_2+2\alpha_3+2\alpha_4+\alpha_5+\alpha_6$ and $\delta'=\alpha_1+\alpha_2+\alpha_3+2\alpha_4+2\alpha_5+\alpha_6$. Then again $\delta\pm\delta'\not\in\Sigma$ and $x_{\delta}(t)x_{\delta'}(t)\in G_{e}$ for all $t\in\k$, so that $e_\delta+e_{\delta'}\in\Lie(G_{e})$. It follows that $r\ge -14$.

As a result, $r\in \{-14,-13,-12,-11,-10,-9,-8,-7,-6\}$. Let $\tau_r$ be the $W$-conjugate of $\tau=(2,2,2,2,2,r)$ contained in $X_*^{+}(T)$. Direct computations show that $\tau_{-14}=(2,0,2,2,2,2)$, $\tau_{-13}=(2,1,2,1,1,1)$,
$\tau_{-12}=(2,2,2,0,2,0)$, $\tau_{-11}=(2,2,1,1,1,1)$, $\tau_{-10}=(2,2,0,2,0,2)$,
 $\tau_{-9}=(1,2,1,1,1,2)$, $\tau_{-8}=(0,2,2,0,2,2)$, $\tau_{-7}=(1,1,1,1,2,2)$,
$\tau_{-6}=(2,0,0,2,2,2)$. Using [\cite{Ca}, p.~402] we observe that only $\tau_{-10}$
has the form $\tau_{\Delta}$ with $\Delta\in\mathfrak{D}_G$. Moreover,  $\Delta={\rm A_5}$. Since $\OO({\rm D_5})=\H(\rm {D_5})$ by Remark~\ref{R3}, we get $e\in\H(\Delta)$, completing the proof.
\end{proof}
\subsection{}\label{4.6}
We are now in a position to reduce proving statement~(\ref{S1}) to the case where $e\in\g(\tau, 2)$ is distinguished in $\g$ and $\OO(e)$ is a proper subset of its Hesselink stratum.

Suppose $e\in \g(\tau,2)$ is not distinguished in $\g$. By \ref{4.2}, the group $G_{e}$ contains a maximal torus $T_0$ commuting with $\tau(\k^\times)$. We also know that the centraliser $L=Z_{G}(T_0)$ is a standard Levi subgroup of $G$ containing a maximal torus $T'$ such that $\tau(\k^\times)T_0\subseteq T'$. Let $L'=\mathcal{D}L$ and $\fl'=\Lie(L')$.
By our discussion in \ref{4.2}, there exists $\tau''\in\hat{\Lambda}_{G}(e)\cap X_*(T')$ such that $e=\sum_{i\ge 2} e_i$ where 
$e_i\in\fl'(\tau'',i)$ and $e_2\ne 0$. If $e$ is $(\Ad L)$-conjugate to $e_2$ then $e\in \H(\Delta)$ by Lemma~\ref{L3}. Therefore, we may assume that $e$ and $e_2$ lie in different orbits of $\H_L(\tau'')$. 
Thanks to Lemma~\ref{4.4} we may also assume that the group $S_{e}$ is semisimple.

If follows from [\cite{Hes78a}, Table~4] that if $H$ is a group of type ${\rm D}_r$ with $r\in\{4,5\}$ then every Hesselink stratum of $\Lie(H)$ is a single $(\Ad H)$-orbit.
Since the same holds for $H$ of type ${\rm E_6}$ and ${\rm A}_r$ with $r\ge 1$, the proper Levi subgroup $L$ of $G$ must have a component of type ${\rm D_6}$, ${\rm D_7}$, ${\rm E_7}$, ${\rm B_2}$, ${\rm B_3}$ or ${\rm C_3}$, where in the last three cases $G$ is a group of type ${\rm F_4}$. 
Thanks to [\cite{LS}, Tables~22.1.1\,--\,22.1.5] we now see that the case we consider can occur only when $p=2$ and $G$ is not of type ${\rm E_6}$ or ${\rm G_2}$.  
	
Suppose $p=2$ and $G$ is of type ${\rm E_7}$. As $\H_L(\tau'')$ contains more than one orbit, the group $L'$ must have type ${\rm D_6}$. By [\cite{Hes78a}, Table~4], the Lie algebra $\fl'$ has a unique new distinguished nilpotent orbit, and the above discussion shows that this new orbit, labelled ${(\rm A_3A_2)_2}$ in [\cite{LS}], must coincide with $\OO_L(e)$.  
By [\cite{LS}, Lemma~12.6], there exists $\mu\in X_*(L')$ such that $e\in\fl'(\mu,2)$ and the orbit $((\Ad P_L(\mu))\,e$ is dense in $\fl'(\mu, \ge 2)$. From this it is immediate that $e$ is a $Z_L(\mu)$-semistable vector of $\fl'(\mu,2)$, so that $\mu\in\hat{\Lambda}_L(e)$. Since
$\hat{\Lambda}_L(e)$ contains $\hat{\Lambda}_{G}(e)\cap X^*(T')$ and $e_2\ne 0$, the cocharacters $\mu$ and $\tau''$ are $L$-conjugate. But then $G_{e}\subset P(\nu)$ and applying Lemma~\ref{L2} with $\nu=\mu$ we get $\mu=\tau$. As a result, $e\in\H(\Delta)$.

Suppose $p=2$ and $G$ is of type ${\rm E_8}$. Then $L'$ has type ${\rm D_6}$, ${\rm D_7}$ of ${\rm E_7}$. If $L'$ has type ${\rm D_6}$ we can repeat verbatim the argument from the previous paragraph to conclude that $e\in \H(\Delta)$. Suppose $L'$ is of type ${\rm D_7}$. By [\cite{Hes78a}, Table~4], the Lie algebra $\fl'$ has two new orbits, denoted by ${(\rm A_3A_2)_2}$ and ${(\rm D_4A_2)_2}$ in [\cite{LS}, Table~15.3]. Since the orbit $\OO_{L'}({(\rm A_3A_2)_2})$ is not distinguished in $\fl'$ it must be that 
$e\in\OO_{L'}({(\rm D_4A_2)_2})$. As [\cite{LS}, Lemma~12.6] is also applicable  
for $e\in\OO_{L'}({(\rm D_4A_2)_2})$ , we can argue as in the second part of the previous paragraph to conclude that
$e\in \H(\Delta)$.
\subsection{}\label{4.7}  Retain the assumptions of \ref{4.6} and
suppose that $p=2$ and $L'$ is of type ${\rm E}_7$. Since $e$ is distinguished in $\fl'$ and $\OO_{L'}(e)\ne \OO_{L'}(e_2)$, it follows from [\cite{LS}, Tables~22.1.2] that $e\in\OO_{L'}(({\rm A_6})_2)$. 
%We claim that this case cannot occur, i.e. there is no
 %$\tau\in X_*(T')$ such that $e\in\g(\tau,2)$ and $G_{e}\subset P(\tau)$.
%Indeed, suppose the contrary. 
Conjugating $e$ by a suitable element of  $L'$ we may assume that  
\begin{equation}\label{E3}
e=e_{56}+e_{67}+e_{134}+e_{234}+e_{345}+e_{245}+e_{123^24^25},
\end{equation}
 where all summands $e_\gamma$ involved in (\ref{E3}) are root vectors of $L'$ with respect to $T'$; see [\cite{LS}, Table~14.1].  Let $W'=N_{L'}(T')/T'$, the Weyl group of $L'$.
 It is easy to check that the cocharacter $\mu=(-2,-2,-2,6,-2,4,-2)$ has the property that $e\in\fl'(\mu,2)$. 
 Since $e$ is distinguished in $\fl'$, the group $L'_e$ is unipotent and $N_{L'}(\k e)=\mu(\k^\times)Z_{L'}(e)$, so that $\mu(\k^\times)$ is a maximal torus of $N_{L'}(\k e)$. Since $e\in \g(\tau,2)$ and $\tau\in X_*(T')$ this entails that $(\Ad \mu(t))\,x=(\Ad \tau(t))\,x$ for all $x\in\fl'$ and $t\in\k^\times$.
 
 %In order to prove the claim it suffices to show that $L'_{e}$ is not contained in %$P_{L'}(\mu)$. 
 A direct computation shows that $\mu$ is $W'$-conjugate to 
 $\tau_{\Delta'}=(2,0,0,2,0,0,2)$ which 
 corresponds to the distinguished $L'$-orbit with label ${\rm E_7(a_4)}$. The latter coincides with its Hesselink stratum $\H_{L'}(\Delta')$. From this it is immediate that $\fl'(\mu, 2)$ contains a Zariski open $Z_{L'}(\mu)$-orbit consisting of  $Z_{L'}(\mu)$-semistable vectors. Since $e\not\in \H_{L'}(\Delta')$ the orbit $(\Ad Z_{L'}(\mu)\,e$ is not dense in $\fl'(\mu,2)$. Let $U_{L'}(\mu):=R_u(P_{L'}(\mu))$. Then $P_{L'}(\mu)=Z_{L'}(\mu)\,U_{L'}(\mu)$. Besides,
 $\Lie(U_{L'}(\mu))=\fl'(\mu,\ge 2)$ and 
  $\Lie(Z_{L'}(\mu))=\fl'(\mu,0)$. 
  In type ${\rm E}_7$, the present case was investigated in [\cite{LS}, pp.~209].
In particular, it was shown there that $L'_e$ is a connected unipotent group of dimension
$19$ and $A:=L'_e\cap Z_{L'}(\mu)$ is a $1$-dimensional connected subgroup of $L'_e$ with the property that $\Lie(A)=\fl'_e(\mu,0)$. 

Straightforward computations show that  
$[e,\fl'(\mu, 4)]$ has codimension $1$ in $\fl'(\mu,6)$ and $[e,\fl'(\mu, 2i)]=\fl'(\mu,2i+2)$ for $i=1$ and all $i\ge 3$. 
Since the group $U_{L'}(\mu)$ is generated by the root elements $x_\alpha(t)\in L'$ with $(\mu|\alpha)\ge 2$ we have that
$(\Ad U_{L'}(\mu))\,e\subseteq 
e+\fl'(\mu,\ge 4).$
Since $[e,\fl'(\mu,2i)]\subset T_e(\Ad U_{L'}(\mu)\,e)$ for all $i\ge 1$, the preceding remark 
 yields that
$T_e(\Ad U_{L'}(\mu)\,e)$ has codimension $\le 1$ in $\fl'(\mu,\ge 4)$.
Therefore,
$\dim U_{L'}(\mu)_e=\dim \Lie(U_{L'}(\mu))-\dim\big(T_e(\Ad U_{L'}(\mu)\,e)\big)
	\le \dim \fl'(\mu,2)+1=18$
(one should keep in mind that $P_{L'}(\mu)$ is a distinguished parabolic subgroup of $L'$ and $\fl'(\mu,0)$ has dimension $17$). Hence $A\cdot U_{L'}(\mu)_e\subseteq  P_{L'}(\mu)_e$ has dimension $18$ or $19$. 

Since $L'_e$ is a connected group of dimension $19$ there are two possibilities one of which would be very bad for us: either $L'_e\subset P_{L'}(\mu)$  or
$L'_e\not\subset P_{L'}(\mu)$ and $T_e(\Ad U_{L'}(\mu)\,e)\,=\,\fl'(\mu,\ge 4)$.
In the second case we would have
$(\Ad U_{L'}(\mu))\,e=e+\fl'(\mu,\ge 4)$ by Rosenlicht's theorem [\cite{Ros}, Theorem~2]. 

Once again our main source of reference comes to the rescue: it is proved in [\cite{LS}, p.~208] that 
$L'_e$ contains the $1$-parameter unipotent subgroup 
$$U=\{x_{\alpha_1}(c)x_{\alpha_1+\alpha_3}(c^2)x_{\alpha_2}(c)x_{\alpha_5}(c)x_{\alpha_7}(c)\,|\,\,c\in\k\}.$$
The Lie algebra of $U$ is spanned by $v=e_1+e_2+e_5+e_7\in \fl'(\mu,-2)$ (and one can check directly that $[e,v]=0$). Therefore, $L'_e\not\subset P_{L'}(\mu)$. As a byproduct we obtain that the orbit $(\Ad P_{L'}(\mu))\,e$ has codimension $1$ on $\fl'(\mu,\ge 2)$.

In type ${\rm E_8}$ the present case was investigated in [\cite{LS}, pp.~247, 248] where the element in (\ref{E3}) was replaced by its $(\Ad G)$-conjugate
$$e'=e_1+e_3+e_4+e_5+e_6+e_7+e_{1234^25^267}.$$ The cocharacter
$\mu$ used above was replaced by $\mu'=(2,-14,2,2,2,2,2,-3)$. (In [\cite{LS}, p.~248], the torus $\mu'(\k^\times)$ is denoted by $\widetilde{T}$.) Direct computations show  that
there is $w\in W({\rm E_8})$ such that $w(\mu')=(0,0,0,1,0,1,0,2)=\tau_{\Delta'}$ where $\Delta'={\rm E_7(a_4)}$. Let $\tilde{\alpha}=23465432$, the highest root of $\Sigma$ with respect to $\Pi$, and denote by $\tilde{\alpha}^\vee$ the corresponding coroot in $X_*(T)$, so that $(\Ad\,\tilde{\alpha}^\vee(t))(e_\gamma)=t^{\la\gamma\,,\,\tilde{\alpha}^\vee\ra} e_\gamma$ for all $t\in\k^\times$ and $\gamma\in\Sigma$. The adjoint action of $\tilde{\alpha}^\vee(\k^\times)$ endows $\g$ with a short $\Z$-grading $\g= \g_{-2}\oplus\g_{-1}\oplus\g_0\oplus\g_1\oplus\g_2$ such that $\dim\g_{\pm 2}=\k e_{\pm\tilde{\alpha}}$ and $\g_0=\Lie(\tilde{L})$ where $\tilde{L}=Z_G(\tilde{\alpha}^\vee)$.

The derived subgroup $\tilde{L}'$ of $\tilde{L}$ has type ${\rm E_7}$ and $\tilde{L}=
T_0\,\tilde{L}'$ where $T_0=\tilde{\alpha}^\vee(\k^\times)$ is a $1$-dimensional central torus of $\tilde{L}$. 
The type ${\rm A_1}$ subgroup $S$ generated by $x_{\pm\tilde{\alpha}}(\k)$ and $T_0$ commutes with 
$\tilde{L}'$. We pick $\sigma\in N_S(\tilde{\alpha}^\vee(\k^\times))$
such that $\sigma(\tilde{\alpha}^\vee)=-\tilde{\alpha}^\vee$. Then $(\Ad \sigma)(\g_1)=\g_{-1}$ implying that $\g_1$ and $\g_{-1}$ are isomorphic as $(\Ad \tilde{L}')$-modules (both modules have dimension $56$ and are irreducible over $\tilde{L}'$). The parabolic subgroup $P(\tilde{\alpha}^\vee)=\tilde{L}\, Q$,
where $Q=R_u(P(\tilde{\alpha}^\vee))$, has the property that $\Lie(Q)=\g_1\oplus\g_2$ and  $\mathcal{D}Q=x_{\tilde{\alpha}}(\k)$. 

Computations in [\cite{LS}, p.~248] show that $\tilde{T}=\mu'(\k^\times)\subset 
T\cap \tilde{L}'$, and $Q\cap G_{e'}$ contains an $8$-dimensional abelian connected unipotent subgroup $V=\prod_{i=1}^8\,V_i$ such that each $V_i$ ia $1$-dimensional subgroup of $V$ normalised by the torus $\mu'(\k^\times)$. The $V_i$'s are described explicitly 
in {\it loc.\,cit.} and it is straightforward to check that $\mu'(\k^\times)$ acts on $V_1$, $V_2$, $V_3$, $V_4$, $V_5$, $V_6$, $V_7$ and $V_8$ with weights $11$, $5$, $3$, $9$, $9$, $1$, $3$ and $7$, respectively. As $\mu'(\k^\times)\subset \tilde{L}'$  commutes with $S$ and $e'\in \Lie(\tilde{L}')$, the group $(\Ad \sigma)(V)=
\prod_{i=1}^8\,(\Ad\sigma)(V_i)\subset G_{e'}$ has the same properties. As a consequence,
 both $V$ and $(\Ad \sigma)(V)$ are contained in $P(\mu')_{e'}$. 
 
 As $e'\in \Lie(\tilde{L}')$ we also have that $S\subset P(\mu')_{e'}$.
 Since $e$ and $e'$ are $(\Ad G)$-conjugate, our discussion at the beginning of this subsection shows that $P_{\tilde{L}'}(\mu')_{e'}$ has codimension $1$ in $\tilde{L}'_{e'}$.
 In particular, $G_{e'}\not\subset  P(\mu')$.
 
 On the other hand, we have that 
 $$\Lie(V)\oplus (\Ad \sigma)(\Lie(V))\oplus \Lie(S) \oplus\Lie(P_{\tilde{L}'}(\mu')_{e'})\subseteq \Lie(P(\mu')_{e'}),$$  and the subspace on the left has dimension $8+8+3+18=37$. As $G_{e'}$ is a group of dimension $38$ by [\cite{LS}, Table~22.1.1], we have that $P(\mu')_e$ has codimension $1$ in $G_{e'}$
  This, in turn, implies that 
 the orbit $(\Ad P(\mu')\,e'$ has codimension $1$ in $\g(\mu',\ge 2)$ and
 $(\Ad R_u(P(\mu'))\,e'\,=\,e'+\g(\mu',\ge 4)$ (since this will not be required in what follows we omit the details).

 We mention for completeness that if $\nu\in X_*(T)$ is such that $e'\in\g(\nu,2)$ and $G_{e'}\subset P(\nu)$ then the root vectors
 $e_{\pm\tilde{\alpha}}\in\Lie(G_{e'})$ must have weight $0$ with respect to $\nu$. Since 
 $G_{e'}\not\subset P(\mu')$, we have thus excluded the only available option for $\nu$, namely, $\nu=\mu'$. Therefore, $\OO({\rm (A_6)_2)}\cap(\cup_{\Delta\in\mathfrak{D}_G}\,\g_2^{\Delta,!})=\varnothing$, that is, the present case cannot occur in types ${\rm E_7}$ and ${\rm E_8}$.
 \subsection{}\label{4.8} Retain the assumptions of \ref{4.6} and suppose that $p=2$ and 
$L'$ is of type ${\rm F_4}$. Since this case has already been treated in [\cite{Vog}], our goal here is to offer a different proof. In view of our remarks in \ref{4.6} we may assume that the Hesselink stratum of $\NN(\fl')$ has more than one orbit and $L'$ is of type
${\rm B}_2$, ${\rm B_3}$ or ${\rm C_3}$. The tables in [\cite{Hes78a}] show that in these cases $\fl'$ has
a unique new distinguished nilpotent orbit. 

The corresponding nilpotent orbits of $\g$ are denoted in [\cite{LS}, Table~22.1.4] by $({\rm \tilde{A}_1})_2$, $({\rm B_2)_2}$ and $({\rm \tilde{A}_2})_2$, respectively.
Parts (i) and (iii) of the proof Lemma~16.9 in [\cite{LS}] show that in the first two cases there exists $\mu\in X_*(L')$ such that $e\in\fl'(\mu,2)$ and the orbit $((\Ad P_L(\mu))\,e$ is dense in $\fl'(\mu, \ge 2)$. As before, this enables us to deduce  that $e$ is $Z_L(\mu)$-semistable in $\fl'(\mu,2)$, so that $\mu\in\hat{\Lambda}_L(e)$. As
$\hat{\Lambda}_L(e)$ contains $\hat{\Lambda}_{G}(e)\cap X^*(T')$ and $e\in\g(\tau,2)$,
applying Lemma~\ref{L2} 
gives  $\mu=\tau$. Hence  $e\in\H(\Delta)$.

Suppose $e\in\OO(({\rm \tilde{A}_2})_2)$. In this case, we may assume that
$e=e_{0121}+e_{1111}+e_{2342}$ and $T'$ is the torus used in the proof of part (ii) 
of  [\cite{LS}, Lemma~16.9]. Let $\tau=(a_1,a_2,a_3,a_4)$ where $a_i\in \Z$, As 
$e\in\g(\mu,2)$ we have $a_2+2a_3+a_4=2$, $a_1+a_2+a_3+a_4=2$ and $2a_1+3a_2+4a_3+2a_4=2$. Solving this system of linear equations gives 
$\tau=(r,-2-r, r,4)$ where $r\in \Z$. By [\cite{LS}, p.~274], the group $G_{e}$ contains
$x_{\pm \alpha_2}(t)$ and $x_{\alpha_1}(t)x_{\alpha_3}(t)$ for all $t\in \k$. Therefore,
$\Lie(G_{e})$ contains $e_{\pm \alpha_2}$ and $e_{\alpha_1}+e_{\alpha_3}$. Since $G_{e}\subset P(\tau)$,
we must have $-2-2r=0$ and $r\ge 0$. This shows that $\tau$ does not exist, that is, the present case cannot occur.
 \subsection{}\label{4.9} From now on we may assume that $e\in\g(\tau_\Delta,2)$ is a distinguished nilpotent element of $\g$. 
 If $\OO(e)=\H(\Delta')$ then there is $\mu\in\hat{\Lambda}_G(e)$ such that
 $e\in\g(\mu,2)$.  As  Lemma~\ref{L2}
is still applicable in the present case we get $\mu=\tau_\Delta$ forcing $e\in\H(\Delta)$.  
In other words, statement~(\ref{S1}) holds for $e$.
 
This shows that we may assume further that $\OO(e)\subsetneq \H(\Delta)$. 
Thanks to the classification results of [\cite{LS}] it remains to consider the case where $e\in\NN(\g)$ is
distinguished and non-standard  (see Remark~\ref{R3} for more detail). No such orbits exist when $G$ has type ${\rm E_6}$ and when $G$ is of type ${\rm G_2}$ and $p=2$. 

If $G$ is of type ${\rm E}_7$ then $p=2$ and $\OO(e)=\OO(({\rm A_6)_2})$. In \ref{4.7}, we have shown that this case cannot occur in our situation.

\begin{lemma}\label{L5}
Suppose $e\in \g(\tau_\Delta,2)$ is distinguished in $\NN(\g)$ and $G_e\subset P(\tau_\Delta)$.
If $e'\in \OO(e)\cap \g(\mu,2)$, where $\mu\in X_*(G)$, then there is $g\in G$ such that
$e'=(\Ad g)(e)$ and $\mu=g\tau_\Delta g^{-1}$. Consequently, $G_{e'}\subset P(\mu)$.
\end{lemma}
\begin{proof}
	Let $g\in G$ be such that $(\Ad g)\,e=e'$. Then $e'\in \g(g\tau_\Delta g^{-1},2)$ and $G_{e'}\subset P(g\tau_\Delta g^{-1})$.
	Since the group $(G_{e'})^\circ$ is unipotent and all maximal tori of $N_{G}(\k e')^\circ=\tau_{\Delta'}(\k^\times)\, (G_{e'})^\circ$ are conjugate, there is $u\in (G_{e'})^\circ$ such that $ug\tau_\Delta g^{-1}u^{-1}=\mu$. 
	Hence $(\Ad ug)(e)=(\Ad u)(e')=e'$
	and
	$G_{e'}=uG_{e'}u^{-1}\subset P(ug\tau_\Delta g^{-1}u^{-1})=P(\mu)$.
\end{proof}
It remains to investigate the case where $(\Sigma,p)$ is one of  $({\rm E_8}, 3)$,
 $({\rm E_8}, 2)$,  $({\rm F_4}, 2)$ or $({\rm G_2},3)$.
 If $(\Sigma, p)=({\rm E_8},3)$ then $\OO(e)=\OO({\rm (A_7)_3})$. 
 Recall our standing assumption that $e\in\g(\tau,2)$, where $\tau=\tau_\Delta$, and $G_e\subset P(\tau)$.
  By [\cite{LS}, Table~4.1], we may assume that $e$ is $(\Ad G)$-conjugate to 
 $$e''= e_{567}+e_{1234}+e_{1345}+e_{3456}+e_{2456}+e_{234^25}+e_{678}+e_{45678}.$$
 The cocharacter $\mu=(0,2,2,-2,2,0,0,2)\in X_*(T)$ has the  property that $e\in\g(\mu,2)$ and is $W$-conjugate to $\tau_{\Delta'}=(0,0,0,2,0,0,0,2)$ with $\Delta'={\rm E_8(b_6})$; see [\cite{LS}, p.~210]. Replacing $e''$ by its $N_G(T)$-conjugate, $e'$ say, we may assume  that $e'\in \g(\tau_{\Delta'}, 2)$.
 By Lemma~\ref{L5}, there is $g\in G$ such that $e'=(\Ad g)(e)$ and $\tau_{\Delta'}=g\tau_\Delta g^{-1}$. Since both $\tau=\tau_\Delta$ and $\tau_{\Delta'}$ lie in $X_*^{+}(T)$  it must be that $\Delta=\Delta'$.
 
 We claim that contrary to our standing assumption on $\tau$ the group $G_{e}$ is not contained in $P(\tau)=P(\tau_{\Delta'})$. Indeed, it follows from [\cite{LS}, Theorem~3.2] that the nonempty open subset 
$\V(\tau,2)_{ss}$ of $\g(\tau,2)$ contains the open $Z_{G}(\tau)$-orbit of $\g(\tau,2)$, we call it $V$. It has the property that
$(\Ad P(\tau))\,v$ is dense in $\g(\tau,\ge 2)$ for every $v\in V$.
If $x\in V$ then $\dim G_{x}=28$, whilst $\dim G_{e'}=30$ by [\cite{LS}, Table~22.1.1].
Since $e'\in\OO(({\rm A_7)_3})$ it must be that $e\not\in \V(\tau,2)_{ss}$. But then the orbit $(\Ad Z_{G}(\tau))\,e$  is not dense in $\g(\tau,2)$ implying that $[e,\g(\tau,0)]$ is a proper subspace of $\g(\tau,2)$. 

In the present case, the normalised Killing form $\kappa$ is non-degenerate and induces a perfect pairing between $\g(\tau,2)$ and $\g(\tau,-2)$. This yields that $\g_e(\tau,-2)=[e,\g(\tau,0)]^\perp$ is nonzero (a different proof can be found in [\cite{LS}, p.~211]). 
On the other hand, $\dim \g_{e}=\dim G_{e}=30$ by [\cite{St}, Table~10]. If $G_{e}\subset P(\tau)$ then $\g_{e}=\Lie(G_{e})$ is contained in 
$\g(\tau,\ge 0)=\Lie(P(\tau))$. As $\g_{e}(-2)\ne 0$, we reach a contradiction. This shows that the present case cannot occur. In other words, $\OO({(\rm A_7)_3})$ has no elements
contained in the union of $\g_2^{\Delta,!}$ with $\Delta\in \mathfrak{D}_G$.
\subsection{}\label{4.10} Suppose $(\Sigma,p)=({\rm E_8},2)$. In this case we have to consider the new distinguished nilpotent orbits, namely, $\OO({(\rm D_5A_2)_2})$, $\OO(({\rm D_7(a_1)_2})$ and $\OO(({\rm D_7)_2})$. Thanks to [\cite{LS}, Table~14.1] we may choose the following representatives in the respective cases
\begin{eqnarray*}
	e'&=&e_{12345}+e_{234^25}+e_{13456}+e_{23456}+e_{34567}+e_{24567}+e_{78}+e_{678},\\
	e'&=&e_5+e_{45}+e_{234^2567}+e_{13}+e_{2456}+e_{3456}+e_{78}+e_8,\\
	e'&=&e_1+e_{234}+e_{345}+e_{245}+e_{456}+e_{567}+e_{678}+e_{12345678}.
\end{eqnarray*}
It is easy to check that in the first case $e'\in\g(\tau_{\Delta'},2)$ where $\tau_{\Delta'}=(0,0,0,0,2,0,0,2)$ is attached to the orbit $\OO({\rm D_5A_2})$.
By Lemma~\ref{L5}, 
there is $g\in G$ be such that $e'=(\Ad g)\,e$ and $\tau_{\Delta'}=g\tau g^{-1}$.
Since $\dim G_{e'}=34=\dim \g(\tau_{\Delta'}, 0)$ by [\cite{LS}, Table~22.1.1] and $G_{e'}\subset P(\tau_{\Delta'})$ by the Lemma~\ref{L5}, the orbit $(\Ad P(\tau_{\Delta'}))\,e$ must be open in $\g(\tau_{\Delta'},\ge 2)$.
But then $e$ must lie in the open  $(\Ad Z_{G}(\tau_{\Delta'}))$-orbit of $\g(\tau_{\Delta'},2)$. As a consequence, $e$ belongs to the nonempty open subset $\V(\tau_{\Delta'},2)_{ss}$ of $\g(\tau_{\Delta'}, 2)$. Since $\tau=\tau_\Delta$ and $\tau_{\Delta'}$ are $G$-conjugate and lie in $X_*^{+}(T)$ we conclude that $\Delta'=\Delta$. 
Hence $e\in\H(\Delta)$.

In the second case, one checks that $e'\in \g(\tau_{\Delta'},2)$ where $\tau_{\Delta'}=(2,0,0,0,2,0,0,2)$ is attached to the orbit $\OO({\rm D_7(a_1)})$. 
By [\cite{LS}, Table~22.1.1], we have $\dim G_{e}=26=\dim\g(\tau_{\Delta'},0)$.
Since $(G_{e})^\circ$ is unipotent, applying Lemma~\ref{L5} and arguing as in the previous case we deduce that $\tau=\tau_\Delta$ and $\tau_{\Delta'}$ are $G$-conjugate and $e\in\V(\tau_{\Delta'},2)_{ss}$. Therefore, $e\in \H(\Delta)$ as wanted.

The case where $e\in \OO(({\rm D_7)_2})$ is more complicated. First we note that
$e'\in\g(\mu,2)$ where
$\mu=(2,-4,-4,10,-4,-4, 10, -4)\in X_*(T)$. One checks directly that $\mu$ is $W$-conjugate to $(2,0,0,2,0,0,2,2)=\tau_{\Delta''}$, where $\Delta''={\rm E_8(b_4)}$. Since both $\tau=\tau_\Delta$ and $\tau_{\Delta''}$ lie in $X_*^{+}(T)$, it follows from Lemma~\ref{L5}
that $\tau=\tau_{\Delta''}$ and there is $v\in \OO(({\rm D_7})_2)\cap \g(\tau,2)$ 
such that $G_v\subset P(\tau)$.
Let $$v'=e_{13}+e_{234}+e_{345}+e_{245}+e_{567}+e_{456}+e_7+e_8,$$ an element of $\g(\tau,2)$. By [\cite{LS}, Tables~13.3 and 22.1.1], we have that $v'\in\OO({\rm E_8(b_4)})$ and $\dim G_{v'}=18=\g(\tau,0)$. 
Since $G_{v'}\subset P(\tau)$ by [\cite{LS}, Theorem~15.1(ii)] the orbit $(\Ad P(\tau)\,v'$ is open in $\g(\tau,\ge 2)$. It follows that the orbit
$V':=(\Ad Z_{G}(\tau))\,v'$ is open in $\g(\tau,2)$. As $v\not\in \OO(v')$ we have that $v\not\in V'$. Hence $\dim\,(\Ad Z_{G}(\tau))\,v<\dim\,\g(\tau,2)$ and, as a consequence,  $[v,\g(\tau,0)]$ is a proper subspace of $\g(\tau,2)$. 

By [\cite{LS}, Table~13.4], the maps $\ad v'\colon\,\g(\tau,4)\to \g(\tau,6)$ and  $\ad v'\colon\,\g(\tau,8)\to \g(\tau,10)$ are not surjective (this also follows from the fact that
$0\ne (v')^{[2]}\in \g_{v'}(\tau,4)$ and $0\ne (v')^{[4]}\in \g_{v'}(\tau,8)$ which is easy to see directly by applying Jacobson's formula for $[p]$-th powers with $p=2$). Using the perfect pairings between $\g(\tau,i)$ and $\g(\tau,-i)$ induced by the normalised Killing 
form $\kappa$ (which is non-degenerate in the present case) one observes that $\g_{v'}(\tau,r)\ne 0$ for $r\in \{-6,-10\}$.
As $v\in\g(\tau,2)$ lies in the Zariski closure of $(\Ad Z_{G}(\tau))\,v'$,
the semi-continuity of the nullity of a rectangular matrix yields that $\g_{v}(\tau, -6)\ne 0$ and  $\g_{v}(\tau, -10)\ne 0$, whilst our earlier remarks entail that $\g_{v}(\tau, -2)=[v,\g(\tau,0)]^\perp\ne 0$. Therefore, $\dim \g_{v}(\tau,<0)\ge 3$. 

As $v\in\OO(e)=\OO(({\rm D_7)_2})$ it follows from [\cite{St}, Table~10] that $\dim \g_{v}=24$. If $G_{v}\subset P(\tau)$ then [\cite{LS}, Table~22.1.1] shows that $\Lie(G_{v})$ is a Lie subalgebra of dimension $22$ in $\g(\tau,\ge 0)$.
But then
$$\dim \g_{v}=\dim\g_{v}(\tau,<0) + \dim \g_{v}(\tau,\ge 0)\ge 3+22=25.$$
This contradiction shows that the present case does not occur, that is, $e\in \OO({(\rm D_7)_2})$ cannot appear as an
element of $\g_2^{\Delta,!}$ with $\Delta\in \mathfrak{D}$.
We thus conclude that statement~(\ref{S1}) holds in type ${\rm E_8}$.
\subsection{}\label{4.11} Suppose $(\Sigma, p)$ is one of $({\rm F_4},2)$ or $({\rm G_2}, 3)$. These cases have been treated in [\cite{Vog}] by computational methods. The argument below will provide an alternative proof. In type ${\rm F_4}$, we only need to consider the non-standard distinguished orbits with labels ${\rm (\tilde{A}_2A_1)_2}$, ${\rm (C_3(a_1))_2}$ and ${\rm (C_3)_2}$.
Indeed,  [\cite{LS}, Theorem~16.1(ii)] implies that every standard distinguished orbit in $\NN(\g)$ has a representative $e\in\g(\mu,2)$ such that the orbit $(\Ad P(\mu))\,e$ is open in $\g(\mu,\ge 2)$, thereby forcing $e\in\V(\mu,2)_{ss}$. Thanks to Lemma~\ref{L5} we also know that $\mu$ is $G$-conjugate to $\tau=\tau_\Delta$. This yields $e\in \H(\Delta)$. 

In view of [\cite{LS}, Table~14.1] we may assume that $e$ is $(\Ad G)$-conjugate to one of the elements $e(i)$ with $i\in\{1,2,3\}$, where
\begin{eqnarray*}
	e(1)&=&e_{234}+e_{1121}+e_{1220}+e_{0122}\  \ \mbox{ in type $\mathrm{(\tilde{A}_2A_1)_2}$},\\
	e(2)&=&e_{123}+e_{0122}+e_{0120}+e_{1222}\ \ \,\mbox{ in type $\mathrm{(C_3(a_1))_2}$},\\
e(3)&=&e_{123}+e_{0120}+e_{4}+e_{1222}\ \ \ \ \ \ \mbox{ in type $\mathrm{(C_3)_2}$}.
\end{eqnarray*}
By Lemma~\ref{L5}, there is a unique $\tau_i\in X_*(T)$ conjugate to $\tau$ and such that $e(i)\in\g(\tau_i,2)$ and $G_{e(i)}\subset P(\tau_i)$. Direct computations show that
$\tau_1=(2,2,-2,2)$, $\tau_2=(2,-2,2,0)$ and $\tau_3=(6,-10,6,2)$. Using [\cite{Bou}, Planche~VIII] one checks directly that in the last two cases $G_{e(i)}$ contains $x_{\alpha_2}(t)$ for every $t\in\k$. 
Then the simple root vector $e_2$ lies in $\Lie(G_{e})\cap \g(\tau, <0)$. As noted in [\cite{LS}, p.~215] we have $x_{-\alpha_1}(t)x_{\alpha_3}(t)\in G_{e(1)}$ for all $t\in \k$ which yields
$\Lie(G_{e(1)})\cap \g(\tau,-2)\ne 0$.  So none of the three cases can occur in our situation.   

Finally, suppose $(\Sigma, p)=({\rm G_2}, 3)$. Thanks to [\cite{LS}, Proposition~13.5]  we only need to consider the  orbit 
$\OO(({\rm\tilde{A}_1)_3})$ which has a nice representative $e'=e_{21}+e_{32}$; see [\cite{LS}, Table~14.1]. 
As $e$ is distinguished and $e'\in \g(\mu, 2)$, where $\mu=(2-2)\in X_*(T)$, it follows from Lemma~\ref{L5} that $\mu$ is $G$-conjugate to $\tau$ and $G_{e'}\subset P(\mu)$.
But  $x_{\alpha_2}(t)\in G_{e'}$ for all $t\in\k$, forcing 
$\Lie(G_{e'})\cap \g(\mu,-2)\ne 0$. This contradiction shows that this case cannot occur either.

Summarising, we have proved that statement~(\ref{S1}) holds for all simple algebraic groups of exceptional types over algebraically closed fields of characteristic $p\ge 0$.
This means that ${\rm LX}(\Delta)=\H(\Delta)$ for all $\Delta\in\mathfrak{D}_G$.
Since $\NN(\g)\subset \Lie(\mathcal{D}G)$ and $Z(G)$ acts trivially on $\g$,
proving Theorem~\ref{main} reduces quickly to the case where $G$ is a simple algebraic group.
For $G$ exceptional, the theorem is a direct consequence of statement~(\ref{S1}). For $G$ classical, the result is known from [\cite{L2}, Theorem~A.2]. The groups of type ${\rm G_2}$, ${\rm F_4}$ and ${\rm E_6}$ were treated earlier in [\cite{Vog}].

  \subsection{}\label{4.13} We would like to finish this paper by a brief discussion of the unipotent analogues of the Lusztig--Xue pieces, ${\rm LX}_u(\Delta)$,  introduced by Lusztig in 
  [\cite{L2}, 2.3].
  
  Let $\mathfrak{U}(G)$ denote the unipotent variety of $G$, the set of all $(\Ad G)$-unstable elements of $G$. The Hesselink stratification 
  $$\mathfrak{U}(G)\,=\,\textstyle{\bigsqcup}_{\Delta\in \mathfrak{D}_G}\,\H_u(\Delta)$$ is described in [\cite{CP}] as follows. Let $\tau=\tau_\Delta$ and write $P(\tau)=Z_{G}(\tau)\,U(\tau)$ where $U(\tau)=R_u(P(\tau))$. Given $k\in\Z_{>0}$ we denote by $U_{\ge k}(\tau)$ the connected normal subgroup of $U(\tau)$ generated by all
  $x_\gamma(t)$ with $t\in\k$ and all $\gamma\in \Sigma$ such that 
  $\la\gamma,\tau\ra\ge k$. It is well known that the factor-group $U_{\ge 2}(\tau)/U_{\ge 3}(\tau)$ is endowed with a natural 
  vector space structure over $\k$ and $\Ad Z_G(\tau)$ acts $\k$-linearly on $U_{\ge 2}(\tau)/U_{\ge 3}(\tau)$. Furthermore, $U_{\ge 2}(\tau)/U_{\ge 3}(\tau)\cong \g(\tau,2)$ as $(\Ad Z_G(\tau))$-modules, and there is a module isomorphism $\bar{\pi}_\Delta\colon\, U_{\ge 2}(\tau)/U_{\ge 3}(\tau)\stackrel{\sim}\longrightarrow \g(\tau,2)$ sending a coset  $\prod_{i=1}^r x_{\beta_i}(t_i)U_3(\tau)$ with $\la \beta_i,\tau\ra=2$ to $\sum_{i=1}^r t_ie_{\beta_i}$, where $e_{\beta_i}$ are root vectors 
  independent of the choice of $(t_1,\ldots,t_r)\in\k^r$; see [\cite{L2}, 2.2], [\cite{LS}, Lemma~18.1] or [\cite{CP}, 3.6]. Composing $\bar{\pi}_\Delta$ with the canonical homomorphism $U_{\ge 2}(\tau)\to U_{\ge 2}(\tau)/U_{\ge 3}(\tau)$ we obtain a natural  surjection 
  $\pi_\Delta\colon\, U_{\ge 2}(\tau)\twoheadrightarrow\, \g(\tau,2)$.
  
  It follows from [\cite{CP}, Theorems~3.6 and 5.2]  that $$\H_u(\Delta)\,=\,(\Ad G)\,\big(\pi_\Delta^{-1}(\V(\tau_\Delta,2)_{ss})\big)$$ for every $\Delta\in\mathfrak{D}_G$. In [\cite{L2}, 2.3], the unipotent pieces ${\rm LX}_u(\Delta)$ 
  are defined in a similar fashion except that $\V(\tau_\Delta,2)_{ss}$ is replaced by an {\it a priory} larger set $\g_2^{\Delta,!}$. More precisely, $${\rm LX}_u(\Delta)\,=\,(\Ad G)\,\big(\pi_\Delta^{-1}(\g_2^{\Delta,!})\big).$$
  
  In [\cite{L2}, Theorem~2.4], Lusztig proved that 
   \begin{equation}\label{EU}
   	\mathfrak{U}(G)\,=\,\textstyle{\bigsqcup}_{\Delta\in \mathfrak{D}_G}\,
  {\rm LX}_u(\Delta)\end{equation}
   when $G$ is a simple algebraic group of type ${\rm A}$, $\rm B$, $\rm C$ or $\rm D$, and he expected that (\ref{EU}) would continue to hold for all connected reductive groups. 
 Our next result shows that this expectation was correct.
 
 \begin{corollary}
  	Let $G$ be a connected reductive group over an algebraically closed field. Then 
  	${\rm LX}_u(\Delta)=\H_u(\Delta)$ for all $\Delta\in\mathfrak {D}_G$ and 
  	(\ref{EU}) holds for $\mathfrak{U}(G)$.
\end{corollary}
  	\begin{proof}
  		Theorem~\ref{main} in conjunction with
  	 the preceding discussion shows that ${\rm LX}_u(\Delta)=\H_u(\Delta)$ for all $\Delta\in\mathfrak{D}_G$. Since the Hesselink strata $\H_u(\Delta)$ with $\Delta\in\mathfrak{D}_G$ form a partition of $\mathfrak{U}(G)$ by [\cite{CP}, Theorem~5.2], we deduce that (\ref{EU}) holds for $\mathfrak{U}(G)$.
  	 \end{proof}

\end{document}